\newcommand*\Bell{\ensuremath{\boldsymbol\ell}}
\def\bc{\mathbf{c}}
\def\be{\mathbf{e}}
\def\boldf{\mathbf{f}}
\def\bi{\mathbf{i}}
\def\bj{\mathbf{j}}
\def\bk{\mathbf{k}}
\def\bl{\mathbf{\Bell}}
\def\bz{\mathbf{z}}
\def\bV{\mathbf{V}}
\def\bZ{\mathbf{Z}}
\def\cA{\mathcal{A}}
\def\cI{\mathcal{I}}
\def\cJ{\mathcal{J}}
\def\cK{\mathcal{K}}
\def\cL{\mathcal{L}}
\def\param{z}
\def\bxi{\boldsymbol{\param}}
\def\bXi{\boldsymbol{\uppercase{\param}}}
\newcommand{\supp}[1]{\mathrm{supp}(#1)}
\def\qoih{f}
\def\qoinh{f_n}
\def\rv{z}         
\def\brv{\bz}        
\newcommand{\intp}[2]{I_{#1}[#2]}
\newcommand{\dom}{I_{\bz}}
\def\bzero{\boldsymbol{0}}
\newcommand{\rev}[1]{#1}
\title[Leja sparse grid high-dimensional interpolation]{Adaptive Leja sparse grid constructions for stochastic collocation and high-dimensional approximation}
\author{Akil Narayan \and John D. Jakeman}
\begin{document}

\begin{abstract}
We propose an adaptive sparse grid stochastic collocation approach based upon Leja interpolation sequences for approximation of parameterized functions with high-dimensional parameters.  Leja sequences are arbitrarily granular (any number of nodes may be added to a current sequence, producing a new sequence) and thus are a good choice for the univariate composite rule used to construct adaptive sparse grids in high dimensions. When undertaking stochastic collocation one is often interested in constructing weighted approximation where the weights are determined by the probability densities of the random variables.  This paper establishes that a certain weighted formulation of one-dimensional Leja sequences produces a sequence of nodes whose empirical distribution converges to the corresponding limiting distribution of the Gauss quadrature nodes associated with the weight function.  This property is true even for unbounded domains. We apply the Leja-sparse grid approach to several high-dimensional and problems and demonstrate that Leja sequences are often superior to more standard sparse grid constructions (e.g. Clenshaw-Curtis), at least for interpolatory metrics.
\end{abstract}
\maketitle

\section{Introduction}
\iftoggle{arxiv}{
  Stochastic collocation (SC) has become a standard tool for non-intrusively quantifying uncertainty in simulation models that are subject to a degree of uncertainty or randomness.  Sources of uncertainty can include physical stochastic processes, parametric uncertainty or model form uncertainty~\cite{xiu_high-order_2005, nobile_sparse_2008,ng_multifidelity_2012}. If the sources of uncertainty can be parameterized by a set of random variables, the approaches used for uncertainty quantification (UQ) frequently reduce to computational methods that describe the behavior of a model with respect to those random variables. 

In this paper we will focus on the use of stochastic collocation methods that utilize sparse grid interpolants~\cite{smolyak_quadrature_1963,zenger_1991,gerstner_numerical_1998,bungartz04} to approximate the dependence of model output on unknown model parameters. Sparse grid stochastic collocation involves constructing an ensemble of random variables realizations, solving a deterministic physical system for each corresponding realization on the sparse grid, and using the resulting model output to build an approximation of the model response to the uncertain parameters. Once constructed, the sparse grid can be evaluated inexpensively to predict the variability of the physical model with respect to the random parameters.

Obtaining the ensemble of model solutions is usually the most expensive part of the collocation procedure since the model under consideration is often very complicated 
(e.g., complex geometry, multiscale features, stiff time-stepping, etc.). This dominance of model execution time on the computational expense of UQ motivates the need to build approximations of the model response that require as few model runs as possible. Sparse grids attempt to minimize the number or parameter realizations by generating ensembles that are geometrically sparse in the state space of the random variable.  Most efficient sparse grid constructions build up the ensemble adaptively by concentrating model evaluations in dimensions of the paramter space where the sparse grid approximation is poor~\cite{gerstner_2003,hegland_2003,ganapathysubramanian_2007}.

One of the crucial considerations when building a sparse grid is the choice of ensemble for the random variable.  The sparse grid is constructed via the union of judicious tensorizations of one-dimensional ensemble grids, and so the identification of these univariate grids is of paramount importance.  Common univariate choices are Clenshaw-Curtis or Chebyshev nodes and Gauss-quadrature-type nodes.

The desired characteristics in choosing a composite univariate rule for input into the sparse grid include: 
\begin{itemize}
  \item efficiency -- high interpolation and/or quadrature accuracy with low cardinality sets
  \item robustness -- consistent and increasing accuracy when the grid is refined
  \item monotonicity -- fine-level grids are supersets of coarse-level grids
  \item granularity -- the number of nodes needed for refinement of a grid is as small as possible
\end{itemize}
Efficiency and robustness are desired when using approximation grids in any context. The monotonicity property is motivated mainly by the high cost of solving deterministic physical simulations and the sparse grid algorithm: if monotonicity holds, then the tensorized sparse grid construction has many fewer total nodes. Granularity becomes important when several levels of refinement are necessary: it is much better to have the ability to add a small number nodes for each refinement step than to be required to, e.g., double the number of nodes.

In this paper we propose use of univariate Leja sequences for use in the sparse grid algorithm for interpolatory high-order and high-dimensional approximation. Leja rules are a sequence of interpolation/quadrature grids in one dimension that are strongly monotonic and granular: coarse grids are always strict subsets of fine grids, and refinement proceeds by adding a single node at a time. Leja sequences are very accurate as we show later, but they are not as efficient as Gauss-type rules in some cases (e.g., quadrature). Therefore, we argue that Leja sequences are a grid choice that serves as a good compromise of the above desired characteristics, in contrast to, e.g. a Gauss quadrature grid that is strong in efficiency and granularity, but very weak in monotonicity. 

For many weight functions of interest (i.e. the random variable probability density function), we show that (our definition of) weighted Leja sequences produce a nodal sequence whose asymptotic distribution coincides with the asympototic distribution of Gauss quadrature nodes associated to the polynomial family orthogonal under the weight function. This result is known for the uniform-weight case; to our knowledge the weighted versions are novel. Additionally, we show that a contracted version of weighted Leja sequences are \textit{asymptotically weighted Fekete}, meaning that their Vandermonde determinant grows comparably to the largest possible value (Fekete). That these results hold for unbounded domains is significant as it suggests that Leja sequences will be accurate for nested interpolatory approximation when the random variable state space is infinite.

In Section \ref{sec:setup} we setup the problem and introduce notation and terminology. In Section \ref{sec:leja} we introduce Leja sequences and formally present the above-mentioned properties. Section \ref{sec:sparse-grids} develops the methodology for adaptive sparse grids. Finally, Section \ref{sec:results} shows that the Leja sparse grid algorithm produces results comparable to well-established sparse grid approximation methods, and in many cases is superior. The proof of our main result in Section \ref{sec:leja} concerning the distribution of Leja sequence nodes is relatively involved, employing results from weighted potential theory; for this reason we leave this until the end in Section \ref{sec:proofs}, serving somewhat as an appendix.

The first half of this paper (Section \ref{sec:leja}) proves certain results about one-dimensional weighted Leja sequences and compares them to other one-dimensional rules. The second half (Sections \ref{sec:sparse-grids} and \ref{sec:results}) uses these weighted Leja sequences for building adaptive multivariate sparse grids.

\section{Setup}\label{sec:setup}
A model problem in the UQ community that serves as a motivating example is a parameterized elliptic equation where the parameters $\bZ$ are random variables:
\begin{align}\label{eq:model-equation}
-\frac{d}{dx}\left[a(x,\bZ)\frac{du}{dx}(x,\bZ)\right]=f(x,\bZ),\quad 
(x,\bZ)\in(0,1)\times I_\bz
\end{align}
This model describes the steady-state temperature distribution $u$ in a one-dimensional domain where the domain has diffusivity coefficient $a$ and experiences an external heat source defined by $f$. Here $x$ is a spatial variable taking values in a one-dimensional interval domain. The variable $\bZ$ is a random vector with density function $\omega(\bZ)$ on domain $I_{\bz}$ corresponding to a probability measure $P$ on a complete probability space. The diffusion coefficient $a$ is a model parameter that varies spatially, but is also influenced by uncertainty. Uncertainty in these kinds of models may arise from, e.g., imprecise knowledge of material parameters or external forcing. Under the assumption that the equation is well-posed almost surely, the solution $u(x, \bZ)$ is random, and essentially depends on $\left(1 + \dim \bZ\right)$ variables. One goal in the UQ community is efficient and accurate prediction of $u(x,\bZ)$ or some quantities of interest that depend on $u$ (e.g. the temperature variance as a function of space $x$). 

One popular technique is the generalized Polynomial Chaos (gPC) approach: we assume that the variation of $u$ with respect to the random parameter $\bZ$ can be described accurately by a finite-degree polynomial:
\begin{align*}
  u(x,\bZ) \simeq u_N(x, \bZ) = \sum_{n=1}^N \widehat{u}_n(x) \phi_n(\bZ),
\end{align*}
where $\phi_n$ are polynomials that satisfy an orthogonality condition
\begin{align*}
  \E [ \phi_n(\bZ) \phi_m(\bZ) ] = \int_{I_z} \phi_n(\bz) \phi_m(\bz) \omega(\bz) \dx{\bz} = \delta_{n,m},
\end{align*}
and the $\widehat{u}_n$ are coefficient functions that depend only on the spatial variable $x$. The determination of the functions $\widehat{u}_n$ is the challenge, and one straightforward procedure is to use a probabilistic sampling strategy to compute these coefficients: let $\bz_m$ for $1 \leq m \leq M$ be given samples of the variable $\bZ$. For each $\bz_m$, equation \eqref{eq:model-equation} is a deterministic differential equation, and any computational simulation or experimental setup may be used to obtain $u(x, \bz_m)$. (Frequently, this solution is a finite-dimensional quantity rather than a function of a continuum variable $x$, but this distinction does not affect the main focus of our discussion.) Once these realizations of $u(x,\bZ)$ are collected, then we attempt to find $\widehat{u}_n(x)$ such that 
\begin{align*}
  \sum_{n=1}^N \phi_n(\bz_m) \widehat{u}_n(x) &\approx u(x, \bz_m), & m &= 1, \ldots, M.
\end{align*}
If one considers $u$ as a scalar, then this is a linear algebra problem, seeking a vector $\bf{\widehat{u}}$ that solves:
\begin{align*}
  \bf{A} \bf{\widehat{u}} \approx \bf{u}.
\end{align*}
This problem may be solved by defining $\approx$ in any convenient fashion: interpolation, least-squares regression, quadrature, or compressive sampling. See, e.g. \cite{xiu_fast_2009,xiu_numerical_2010,cohen_stability_2013,yan_stochastic_2012}. Usually the particular choice made is dependent on the relationship between $M$ and $N$ (determined by the computational cost of computing each solution realization) and dependent on some \textit{a priori} understanding of the accuracy for the choice.

One major difficulty with this approach is the selection of $\bz_m$ when $\dim \bZ$ is large. While the spatial variable $x$ is usually restricted to have dimension less than or equal to 3, it is not uncommon to have 100 or more parameters as the components of $\bZ$. High-dimensional approximation has been a persistent bottleneck in modern scientific computing. Methods that work very well for a small number of dimensions are rendered ineffective or impossible to implement in a large number of dimensions, owing to the curse of dimensionality: exponential dependence of functional complexity with respect to the parametric dimension (when the functional smoothness is fixed). Tensor product constructions and space-filling designs adopt this complexity with respect to dimension. 

Attempting to circumvent the computationally onerous space-filling property is the main reason to consider alternatives such as sparse grids. The sparse grid construction still employs tensor product grids, but it does so in a way that attempts to control the cardinality of the mesh and delays the curse of dimensionality. Sparse grids are formed by (unions of) tensorized one-dimensional grids, and therefore an educated selection of the composite one-dimensional rules is of great importance. 

In this paper we consider high-order interpolatory approximation using a sparse grid, and we employ weighted Leja grids as the one-dimensional composite rules. Weighted Leja grids are nested grids (they are a sequence), and we prove that the nodes distribute identically to the one-dimensional Gauss quadrature rules. Thus, Weighted Leja sequences distribute nodes in a way that emulates the Gauss quadrature rule, and have the advantage of being nested. 

Gauss-Kronrod \cite{laurie_calculation_1997,calvetti_computation_2000,gautschi_orthogonal_2004} and Gauss-Patterson \cite{patterson_optimum_1968} rules are likewise nested interpolatory schemes, but their computation is usually restricted to special weight functions because computation of the nested rules is relatively difficult. In contrast, weighted Leja sequences are simple and very easy to compute (exactly and approximately) even for exotic weight functions.

\rev{Our approach with weighted Leja sequences considers polynomial approximation on unbounded domains, but there are alternative high-order approaches. As described in \cite{boyd_chebyshev_2001}, there are three popular approaches to high-order approximation on unbounded domains: (1) expansions on infinite domains using polynomial or non-polynomial complete basis sets \cite{stenger_numerical_1981, narayan_generalization_2011} (2) domain trunction, where the full domain $I_z$ is replaced by a compact subset \cite{boyd_chebyshev_1988,boyd_optimization_1982}, and (3) mapping techniques \cite{weideman_spectral_1990,grosch_numerical_1977,gottlieb_numerical_1987} where standard methods on compact intervals are ``transplanted" onto an infinite interval via a domain mapping. Each of these methods can perform accurate approximation on unbounded domains depending on the application. Our approach is most closely related to (1), but in principle one may use Leja sequences for any of the above methods. However, this application is outside the scope of this paper.}

For the remainder of this paper, we replace the uppercase variable $\bZ$ (traditionally denoting a random quantity) with its lowercase counterpart $\bz$: stochasticity does not affect our approach so in principle we may treat the random parameter as a deterministic parameter $\bz$ with corresponding weight function $\omega$.


}{
  
}

\section{Weighted Leja points}\label{sec:leja}
\iftoggle{arxiv}{
  In this section we present and establish important properties of univariate Leja sequences. Consider approximation in the scalar variable $z$ over the domain $I$ in the presence of a weight function $w$. In the context of this paper, $z$ represents one component of the vector-valued parameter $\bz$, $I$ is the one-dimensional restriction of $I_{\bz}$ to the appropriate dimension, and $w(z)$ is the marginal density computed from the joint density $\omega(\bz)$.

A Leja sequence (unweighted) is classically defined \cite{edrei_sur_1940,leja_sur_1957} as a sequence of points $z_n \in I = [-1,1] \subset \R$ for $n = 1, 2, \ldots,$ such that 
\begin{align}\label{eq:unweighted-leja-objective}
  z_{N+1} = \argmax_{z \in [-1,1]} \prod_{n=0}^N \left| z - z_n \right|,
\end{align}
where the starting point of the sequence $z_0$ is arbitrarily chosen in $[-1,1]$. We note that it is only sensible to define the above Leja sequences on bounded domains. We list below the properties of one-dimensional (unweighted) Leja sequences:
\begin{itemize}
  \item Leja sequences are not unique. The initial point $z_0$ may be arbitrarily chosen and the objective function being maximized need not have a unique maximizer.
  \item The Leja construction provides an interpolation \textit{sequence}. Therefore if $\{z_1, \ldots, z_7\}$ are a Leja sequence, then we require only one more point $z_8$ to construct a higher-order interpolant. This addresses the granularity criterion for grids, and will be useful in minimizing the number of function evaluations necessary for approximation in high dimensions.
  \item Maximizing the objective function \eqref{eq:unweighted-leja-objective} is equivalent to a greedy determinant maximization (e.g., \cite{de_marchi_leja_2004}). With $z_1, \ldots, z_{N-1}$ specified, let $V_N(z)$ be the $N \times N$ interpolatory Vandermonde-like matrix for the space $\Pi_{N-1}$ at the collocation points $z_1, \ldots, z_{N-1}, z$. (The choice of basis for $\Pi_{N-1}$ does not affect maximization.) Then \eqref{eq:unweighted-leja-objective} is equivalent to
  \begin{align*}
    z_{N+1} = \argmax_{z \in [-1,1]} \left| \det V_N(z) \right|
  \end{align*}
  Thus one can view Leja sequences as a greedy $D$-optimal experimental design \cite{fedorov_theory_1972}.
  \item The Lebesgue constant for interpolation on a Leja sequence grows subexponentially \cite{taylor_lebesgue_2010}.
  \item Leja sequences are asymptotically Fekete \cite{bloom_polynomial_1992}. (This is implied by the subexponentially growing Lebesgue constant.) Fekete points are those whose Vandermonde determinant is as large as possible. (These are known to be Gauss-Lobatto nodes in one dimension \cite{fejer_bestimmung_1932}.) The asymptotically Fekete property essentially means that the Vandermonde determinant of a Leja sequence grows on par with that of true Fekete nodes.
  \item Any Leja sequence asymptotically distributes according to the Chebyshev (arcsine) measure. (This is implied by the asymptotically Fekete property \cite{bloom_polynomial_1992}.)
  \item In practice, optimization of \eqref{eq:unweighted-leja-objective} over a discrete candidate set can be accomplished in computationally efficient ways \cite{baglama_fast_1998} and with standard numerical linear algebra routines: E.g., the row permutation information from a row-pivoted $L U$ matrix decomposition gives the Leja sequence order, e.g. \cite{bos_computing_2010}. 
\end{itemize}
\begin{figure}
\begin{center}
  \iftoggle{arxiv}{
    \resizebox{\textwidth}{!}{\includegraphics{empirical-distribution}}
  }{
    \resizebox{\textwidth}{!}{\includegraphics{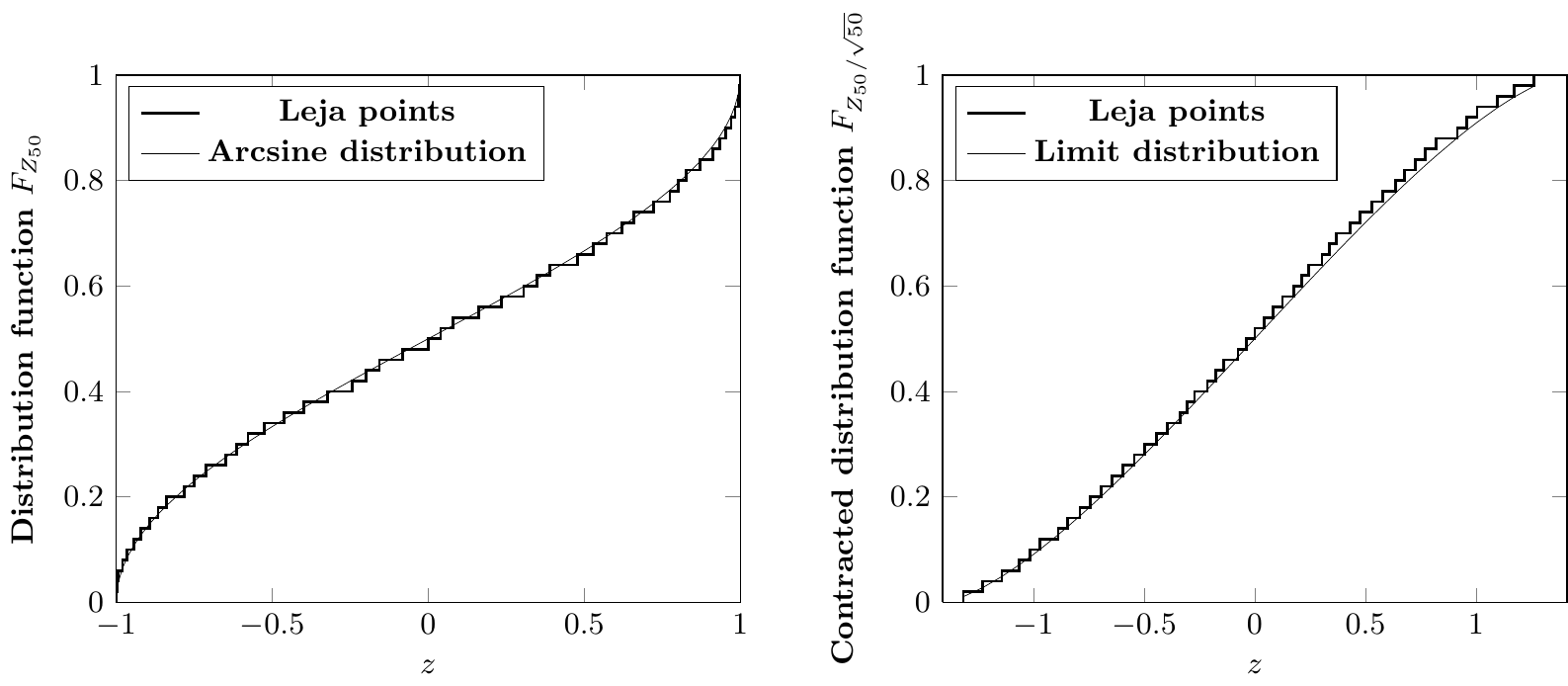}}
  }
\end{center}
\caption{Theoretical asymptotic distribution of Gauss quadrature nodes for the family of polynomials orthogonal under weight $\omega$, and empirical distributions for the first 50 associated Leja points (denoted $Z_{50}$ in both cases). Left: uniform weight $w \equiv 1$ with limiting (arcsine) distribution. Right: Gaussian density weight $w(z) = \exp(-z^2)$ with a 50-point Leja sequence, contracted to the origin by a factor of $\sqrt{50}$, along with the contracted limiting distribution of the Gauss quadrature nodes. (See tables \ref{tab:summary-table} and \ref{tab:hermite-table})}\label{fig:univariate-leja-distribution}
\end{figure}

The Leja sequences introduced above are quite useful for unweighted polynomial interpolation. However, in the UQ context we are usually interested in interpolation involving a weight function (here, the marginal density of the random variables). Therefore, we are also interested in \textit{weighted} Leja sequences. Let $w(z)$ be a continuous and positive weight function on a univariate domain $I$, and let 
\begin{align}\label{eq:square-root-weight}
v(z) = \sqrt{w(z)}
\end{align}
be its square root. If $I$ is unbounded, we assume that polynomials are dense in the space of continuous functions, measured in the $v$-weighted supremum norm. For example, if $v(z) \propto \exp(- |z|^\alpha)$, then we require $\alpha \geq 1$, e.g. \cite{lubinsky_survey_2007}. This density assumption is necessary in our context: we cannot hope to form an accurate polynomial approximation without polynomial density.

A weighted Leja sequence can be constructed via the optimization
\begin{subequations}\label{eq:weighted-leja-objective}
\begin{align}\label{eq:maximizer}
  z_{N+1} = \argmax_{z \in I} \sqrt{w(z)} \prod_{n=0}^N \left| z - z_n \right| = \argmax_{z \in I} v(z) \prod_{n=0}^N \left| z - z_n \right|.
\end{align}
In the case where multiple choices of $z$ maximize the objective, for concreteness we choose the one with smallest magnitude, i.e.,
\begin{align}\label{eq:maximizer-choice}
  z_{N+1} &= \argmin_z \left\{|z|\,\, |\,\, z \in f^{-1}\left( W \right) \right\},
  & W = \max_{z \in I} f(z) \triangleq \max_{z \in I} v(z) \prod_{n=0}^N \left| z - z_n \right|.
\end{align}
\end{subequations}
The sequence of points $z_n$ produced by the above iteration is the central study of this paper, and we refer to this sequence as a weighted Leja sequence, or a $w$-weighted Leja sequence. This formulation still leaves a benign ambiguity when multiple maximizers differ only in sign. \rev{In this paper, Leja sequence optimization is one-dimensional, so in all that follows we optimize via \eqref{eq:weighted-leja-objective} exactly (up to machine accuracy).}

In general there is no standard choice of how to incorporate the weight function into a Leja objective; we have chosen $v = \sqrt{w}$, but alternatives have been proposed \cite{de_marchi_leja_2004, saff_logarithmic_1997}. Our choice above is motivated by the fact that under this formulation the sequence of points produced has the same asymptotic distribution as $w$-Gauss quadrature nodes. We illustrate this property now: In Figure \ref{fig:univariate-leja-distribution} we show that the distribution of a univariate 50-point weighted Leja sequence seems to converge to the distribution of the Gauss points associated with the family of polynomial orthogonal under the weight $w$. Therefore, the objective \eqref{eq:weighted-leja-objective} produces points that are `approximately' Gauss nodes, with the additional benefit of being nested. This distribution property alone does not guarantee that Leja sequences are useful, but we provide several examples in this paper that suggest that Leja points have utility.

We emphasize that unlike the unweighted case \eqref{eq:unweighted-leja-objective}, weighted Leja sequences are constructible on unbounded domains given our assumptions. In Figure \ref{fig:whack-a-mole} we show a graphical illustration of the iterative Leja procedure that produces a Leja sequence.

\begin{figure}
\begin{center}
  \iftoggle{arxiv}{
    \resizebox{\textwidth}{!}{\includegraphics{whack-a-mole}}
  }{
    \resizebox{\textwidth}{!}{\includegraphics{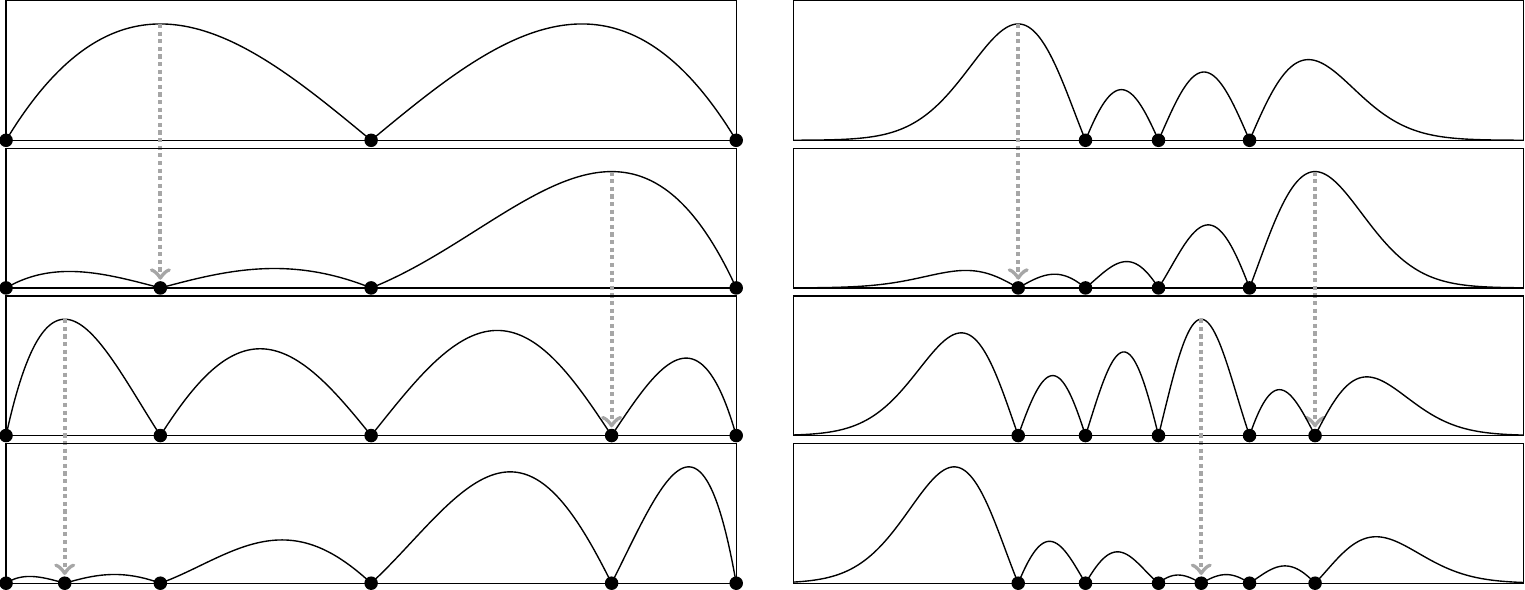}}
  }
\end{center}
\caption{Sequential addition of nodes (top to bottom) via iteration on the Leja objective \eqref{eq:weighted-leja-objective}. Left: A Leja sequence on $[-1,1]$ with $v = w \equiv 1$ (i.e., unweighted). Right: A weighted Leja sequence on $\R$ with $v^2 = w(z) = \exp(-z^2)$.}\label{fig:whack-a-mole}
\end{figure}

\subsection{Limits of weighted Leja sequences}
A significant concern for the unbounded case with the weighted Leja sequences we have introduced above is that they `do the right thing', i.e. that they produce a set of nodes that is desirable from the approximation theory point of view.\footnote{Alternative propositions for weighted Leja points \cite{saff_logarithmic_1997} construct points on a compact subset of the domain. These points are asymptotically optimal if we are interested in approximation with a basis $v^n p_n$.But because we are interested in approximation on an unbounded domain with just $v p_n$, we require samples to be produced on the entire domain.}

The purpose of this subsection is to show that the weighted Leja sequences we have proposed \eqref{eq:weighted-leja-objective} produce points whose asympototic distribution is identical to the Gauss quadrature nodes associated with the corresponding $w$-orthogonal polynomial family. To be precise, given a classical weight function $w$, it is known that there is another weight function $\widetilde{v}$ such that the empirical distribution of the $w$-Gauss quadrature nodes converges to the $\widetilde{v}$-weighted potential equilibrium measure; roughly speaking, $\widetilde{v} \sim \sqrt{w}$, modulo multiplicative polynomial factors. We show that for this same class of weight functions, the empirical measure of the Leja sequence converges to the same equilibrium measure. On unbounded domains with exponential weights, $\widetilde{v}$ is the square root of $w$. On bounded domains with Jacobi-type weights, $\widetilde{v}$ is the uniform weight.

\rev{A comprehensive discussion of potential theory and equilibrium measures can be found in \cite{saff_logarithmic_1997}; here and later in Section \ref{sec:proofs}, which contains proofs, we give a brief account of this topic.} Let $\widetilde{v}$ be a weight function on $I$ that we will precisely relate to $w$ later; define $Q \triangleq -\log \widetilde{v}$. If $\widetilde{v}$ is admissible\footnote{$v$ must be admissible in the sense of potential theory: (1) it is a non-negative upper semicontinuous function (2) $I \cap \widetilde{v}^{-1}((0, \infty])$ is nonpolar in the sense of potential theory (a polar is `negligible' and has Lebesgue measure 0), and (3) if $I$ is unbounded, $|z| \widetilde{v}(z) \rightarrow 0$ as $|z|\rightarrow \infty$} on $I$, then there is a unique probability measure $\mu_{\widetilde{v}}$ that minimizes a weighted logarithmic energy. This measure $\mu_{\widetilde{v}}$ is the logarithmic potential equilibrium measure of the domain $I$ in the presence of the external field (i.e. weight) $Q$. (See Section \ref{sec:proofs} for a more detailed discussion.) When $\widetilde{v}$ is the uniform measure on a compact interval $I$, then $\mu_{\widetilde{v}}$ is the arcsine, or Chebyshev measure. 

For each $N \in \N$, let $\xi_{n,N}$ with $n = 1, \ldots, N$ denote the $N$ zeros of the degree-$N$ polynomial orthogonal under $w(z)$. I.e., $\xi_{n,N}$ are the $N$-point $w$-Gauss quadrature nodes. Let $z_n$ denote any sequence of weighted Leja nodes given by \eqref{eq:weighted-leja-objective}. The $\xi_{n,N}$ are a triangular array ($n \leq N$) while the $z_n$ are a sequence. We introduce a contraction factor $k_n$ defined in the following theorem; this contraction factor is used to define the empirical (counting) measure for an $N$-point Gauss ($\xi_{n,N}$) and Leja ($z_n$) grid, respectively:
\begin{align*}
  \nu^G_N &= \frac{1}{N} \sum_{n=1}^N \delta_{\left(k_N \xi_{n,N}\right)}, &
  \nu^L_N &= \frac{1}{N} \sum_{n=1}^N \delta_{\left(k_N z_{n}\right)},
\end{align*}
where $\delta_{z}$ is the Dirac distribution centered at $z$. Our main result in this section shows that for most classical univariate weight functions of interest, $\nu^G_N$ and $\nu^L_N$ limit to the same measure.
\begin{theorem}\label{thm:main-leja-result}
  Let $w(z)$ be a weight function on $I$.
  \begin{enumerate}
    \item (Generalized Hermite) \newline 
      Let $w(z) = z^{2\mu} \exp(-|z|^\alpha)$ for any $\alpha \geq 1$, $\mu > -\frac{1}{2}$ on $I = \R$. Define $k_n = n^{-1/\alpha}$ and $\widetilde{v} = \exp\left(-\frac{1}{2} |z|^\alpha\right)$.
    \item (Laguerre) \newline 
      Let $w(z) = z^s \exp(- |z|)$ for any $s > -1$ on $I = [0, \infty)$. Define $k_n = n^{-1}$ and $\widetilde{v}(z) = \exp\left(-\frac{1}{2} z \right)$.
    \item (Jacobi) \newline 
      Let $w(z) = (1-z)^\alpha (1+z)^\beta$ for any $\alpha,\beta > -1$ on $I = [-1,1]$. Define $k_n \equiv 1$ and $\widetilde{v}(z) \equiv 1$.
  \end{enumerate}
  In all of the above cases, we have
  \begin{align}\label{eq:measure-equality}
    \lim_{n\rightarrow\infty} \nu^G_n = \mu_{\widetilde{v}} 
    = \lim_{n\rightarrow\infty} \nu^L_n,
  \end{align}
  where equality holds in the weak sense.
\end{theorem}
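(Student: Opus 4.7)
The equality asserted by \eqref{eq:measure-equality} splits into two independent convergence statements: (i) $\nu^G_n \to \mu_{\widetilde{v}}$ and (ii) $\nu^L_n \to \mu_{\widetilde{v}}$. Statement (i) is classical. For the Jacobi weight it reduces to the well-known arcsine distribution of Gauss--Jacobi nodes. For the generalized Hermite and Laguerre cases, it follows from the theory of zero distributions for orthogonal polynomials with respect to varying exponential weights (Mhaskar--Rakhmanov--Saff, Gonchar--Rakhmanov, Lubinsky): the prescribed $k_n$ is precisely the Mhaskar--Rakhmanov--Saff contraction that pulls the growing support $[-a_n,a_n]$ of the zeros onto a compact set, while the polynomial prefactors $z^{2\mu}$ and $z^{s}$ are negligible on the logarithmic scale of the exponential factor and therefore do not alter the limiting equilibrium measure. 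This leaves (ii) as the main content.

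For (ii), the strategy is to prove that the contracted Leja array $\{k_N z_n\}_{n=1}^{N}$ is \emph{asymptotically weighted Fekete} for the admissible external field $Q = -\log \widetilde{v}$, and then to invoke the standard theorem of weighted potential theory stating that the empirical measures of any asymptotically weighted Fekete array converge weakly to the equilibrium measure $\mu_{\widetilde{v}}$. Concretely, introduce the $N$-point weighted Vandermonde quantity
\begin{align*}
V_N(z_1,\ldots,z_N) \;=\; \prod_{1 \le i < j \le N} |z_i - z_j| \,\prod_{j=1}^{N} \widetilde{v}(z_j)^{N-1},
\end{align*}
whose maximum over $I^{N}$ defines the weighted Fekete value $\delta_N^{w}$, and show that
\begin{align*}
\lim_{N\to\infty} V_N(k_N z_1,\ldots,k_N z_N)^{1/N^{2}} \;=\; \lim_{N\to\infty} \bigl(\delta_N^{w}\bigr)^{1/N^{2}},
\end{align*}
the common value being the weighted transfinite diameter of $I$ with respect to $\widetilde{v}$. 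The upper bound $V_N \le \delta_N^{w}$ is automatic; the matching lower bound is the heart of the argument and is extracted by unrolling the greedy Leja optimality \eqref{eq:maximizer}: at each step $n$, the attained objective value $v(z_n)\prod_{k<n}|z_n-z_k|$ dominates the value obtained by substituting any competitor, in particular the $n$th point of a genuine weighted Fekete configuration of size $N$. Telescoping over $n=1,\ldots,N$ converts these stepwise inequalities into a comparison of $V_N$ with $\delta_N^{w}$ up to a factor whose logarithm is sub-quadratic in $N$.

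The main obstacle lies in the unbounded-domain cases (Hermite and Laguerre). Three difficulties must be handled simultaneously: the raw Leja points escape to infinity, the polynomial prefactor in $w = \widetilde{v}^{2}\cdot(\text{polynomial})$ distorts the greedy objective away from the ``clean'' external field $Q$, and the relevant transfinite diameter lives at the Mhaskar--Rakhmanov--Saff scale $1/k_n$. The remedy is the change of variables $y = k_N z$, under which the $N$-step Leja objective on $I$ transforms into an objective on the scaled domain $k_N \cdot I$ with an effective per-point external field whose $N$-normalized logarithm converges uniformly on compact sets to $Q$. In this rescaled setting the polynomial prefactors contribute only $O(\log N)$ to $\log V_N$ and therefore vanish after dividing by $N^{2}$, while the admissibility hypothesis $|z|\widetilde{v}(z) \to 0$ guarantees that $\mu_{\widetilde{v}}$ has compact support and that the contracted Leja nodes cannot drift off that support in the limit. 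Once the asymptotic weighted Fekete property is established, weak convergence $\nu^L_N \to \mu_{\widetilde{v}}$ follows from the standard lower-semicontinuity and uniqueness properties of the weighted logarithmic energy functional.
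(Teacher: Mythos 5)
Your top-level architecture matches the paper's: reduce \eqref{eq:measure-equality} to the classical Gauss-node result plus the claim that the contracted Leja array is asymptotically weighted Fekete (Theorem \ref{thm:asymptotically-fekete}), then invoke the standard Fekete-array result (Lemma \ref{lemma:fekete-limit}). But the mechanism you propose for the crucial lower bound does not work. Substituting the $n$th point of a genuine weighted Fekete configuration into the greedy optimality of \eqref{eq:weighted-leja-objective} and telescoping produces a product of \emph{mixed} distances $|\xi_n - z_k|$ between Fekete points and previously chosen Leja points; these do not reassemble into the Fekete Vandermonde, and there is no reason the mixed product is within a subquadratic-log factor of $\delta_N^{w}$ (a Fekete point may sit arbitrarily close to an earlier Leja point). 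The paper's lower bound is obtained by a different idea: each greedy maximum is the sup norm of a $\widetilde{v}$-weighted (near-)monic polynomial and is therefore bounded below through the weighted Chebyshev constant, via \eqref{eq:chebyshev-constant-bound}, after using the Mhaskar--Saff localization identities \eqref{eq:supnorm-house-1}--\eqref{eq:supnorm-house-2} both to control where the Leja points can lie and to convert the single-power weight in the Leja objective into the varying weight $\widetilde{v}^{\,n}$ after contraction.

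A second, quantitative error in your plan: you assert that after rescaling everything beyond the main comparison is ``sub-quadratic in $N$'' and hence harmless at the $1/N^{2}$ exponent. In the Hermite and Laguerre cases this is false for the contraction bookkeeping itself: the factors $n^{-j/\alpha}$ against $j^{(j+\mu)/\alpha}$ combine into $\prod_{j\le n}(j/n)^{j/\alpha}$, whose logarithm is of order $N^{2}$ and whose $1/m_n$-power limit is exactly $\exp\left(-\tfrac{1}{2\alpha}\right)$ (a Riemann sum for $\int_0^1 x\log x\,dx$). This genuinely quadratic correction is indispensable because in the weighted setting $\tau_{\widetilde{v}} \neq \delta_{\widetilde{v}}$; the proof only closes because of the identity \eqref{eq:hermite-chebyshev-transfinite-relation} (more generally \eqref{eq:chebyshev-transfinite-relation}), so that $\tau_{\widetilde{v}}\exp\left(-\tfrac{1}{2\alpha}\right)=\delta_{\widetilde{v}}$. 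Discarding this term, or stopping at the Chebyshev bound alone, would either overshoot the upper bound $\delta_{\widetilde{v}}$ or fail to reach it. Finally, your remark that the polynomial prefactors are negligible needs an argument when the parameters are negative ($\mu<0$, or Jacobi $\alpha,\beta\in(-1,0)$), where $v$ is unbounded at isolated points: the paper handles this by showing the first Leja steps land on the singular endpoints (or the origin), after which the absorbed factors turn the objective into one with positive parameters; some such reduction is needed in your write-up as well.
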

\begin{remark}
  The ``Hermite" result from the Theorem above that $\lim_n \nu_n^L \rightarrow \mu_{\widetilde{v}}$ also holds for $0 < \alpha < 1$. However, since polynomials are not dense for this weight function \cite{lubinsky_survey_2007}, it is unclear if one should use polynomial approximation in this case.
\end{remark}
The above theorem states that the Leja sequence produced by \eqref{eq:weighted-leja-objective} produces a nested mesh whose samples distribute precisely like $w$-Gaussian quadrature nodes. We emphasize that while this property is promising, it does not guarantee a good approximation: for example, one can generate a grid according to the acrsine measure on $I = [-1,1]$ whose Lebesgue constant does not grow subexponentially \cite{bloom_polynomial_1992}. However, an unweighted Leja sequence is known to have subexponentially growing Lebesgue constant (which is `good enough' in a sense for approximating very smooth functions). In practice, unweighted Leja sequence have logarithmically-growing Lebesgue constant. However, to our knowledge it is presently unknown if weighted Leja sequences have subexponentially growing weighted Lebesgue constant.

The portion of \eqref{eq:measure-equality} that relates the Gauss quadrature node distribution (zero distribution of orthogonal polynomials) to the measure $\mu_{\widetilde{v}}$ is well-known: \cite{mhaskar_extremal_1984,nevai_asymptotic_1979,rakhmanov_asymptotic_1984,ullman_orthogonal_1980,mhaskar_extremal_1983,saff_logarithmic_1997}. That the unweighted Leja node distribution converges to the arcsine measure is likewise well-known. Our novel contribution to result \eqref{eq:measure-equality} is for the limit for the weighted Leja formulation \eqref{eq:weighted-leja-objective}.

We give a summary of the weights $w$, contraction factors $k_n$, and some details about the asymptotic measures $\mu_{\widetilde{v}}$ in Table \ref{tab:summary-table}. The formulas for the exponential weight $w(z) = \exp(-|z|^\alpha)$ are not explicit for general $\alpha$, so we explicitly compute and collect the density and distribution expressions for a selection of values for $\alpha$ in Table \ref{tab:hermite-table}. Finally, the densities associated to $\mu_{\widetilde{v}}$ are plotted for these specials cases in Figure \ref{fig:asymptotic-density}.

\begin{table}
  \renewcommand{\tabcolsep}{0.4cm}
  \renewcommand{\arraystretch}{1.3}
  \begin{center}
  \resizebox{\textwidth}{!}{
  \begin{tabular}{@{}cccccc@{}}\toprule
    Class & Domain & Parameters & Weight & Contraction $k_n$ & Equilibrium weight \\ \midrule
  Hermite & $I = \R$ & $\alpha \in [1, \infty),\,\, \mu \in \left(-\frac{1}{2}, \infty\right)$ & $w(z) = z^{2\mu} \exp(-|z|^\alpha)$ & $k_n = n^{-1/\alpha}$ & $\widetilde{v}(z) = \exp\left(-\frac{1}{2} |z|^\alpha\right)$ \\
    Laguerre & $I = [0, \infty)$ & $s \in (-1, \infty)$ & $w(z) = z^s \exp( -z)$ & $k_n = n^{-1}$ & $\widetilde{v}(z) = \exp\left(- \frac{1}{2} z\right)$ \\
    Jacobi & $I = [-1, 1]$ & $\alpha, \beta \in (-1, \infty)$ & $w(z) = (1 - z)^\alpha (1 + z)^\beta$ & $k_n \equiv 1$ & $\widetilde{v} \equiv 1$ \\
  \bottomrule
  \end{tabular}
}
  \end{center}
  \renewcommand{\arraystretch}{1}
  \renewcommand{\tabcolsep}{12pt}
  \vskip 10pt
  \begin{center}
  \resizebox{\textwidth}{!}{
  \renewcommand{\tabcolsep}{0.4cm}
  \renewcommand{\arraystretch}{1.8}
  \begin{tabular}{@{}cccc@{}}\toprule
    Class & $\supp{\mu_{\widetilde{v}}} = [a,b]$ & Distribution $F(t) = \mu_{\widetilde{v}}\left[\,\left(-\infty, t\right]\,\right]$ & Density $f(t) = \dfdx{\mu_{\widetilde{v}}}{t}$ \\ \midrule
  Hermite & $-a = b = \left[ 2^{\alpha-1} B\left(\frac{\alpha}{2}, \frac{\alpha}{2}\right)\right]^{1/\alpha}$ & See Table \ref{tab:hermite-table} & $f(t) = \frac{\alpha}{\pi b^\alpha} \int_{|t|}^b  \frac{u^{\alpha-1}}{\sqrt{u^2 - t^2}} \dx{u}$ \\
  Laguerre & $a=0,\,\, b = 4$ & $F(t) = \frac{2}{\pi} \arcsin \left(\frac{\sqrt{t}}{2}\right) + \frac{1}{2\pi}\sqrt{t(4-t)}$ & $f(t) = \frac{1}{2 \pi} \sqrt{\frac{4-t}{t}}$ \\
    Jacobi & $a = -1,\,\, b = 1$ & $F(t) = \frac{1}{2} + \frac{1}{\pi} \arcsin t$ & $f(t) = \frac{1}{\pi} \frac{1}{\sqrt{1-t^2}}$ \\
  \bottomrule
  \end{tabular}
  \renewcommand{\arraystretch}{1}
  \renewcommand{\tabcolsep}{12pt}
}
\end{center}
\caption{Asymptotic distributions and densities to which contracted weighted Leja sequences (and Gauss quadrature nodes) converge. Given a weighted Leja sequence $z_n$, the formulae to which the quantities in this table correspond is given by Theorem \ref{thm:main-leja-result}.}\label{tab:summary-table}
\end{table}

\begin{table}
  \begin{center}
\resizebox{\textwidth}{!}{
  \renewcommand{\tabcolsep}{0.4cm}
  \renewcommand{\arraystretch}{1.8}
  \begin{tabular}{@{}c|cccc@{}}\toprule
    & & & Distribution $F(t) = \mu_{\widetilde{v}}\left[\,\left(-\infty, t\right]\,\right]$ & Density $f(t) = \dfdx{\mu_{\widetilde{v}}}{t}$ \\
  & & & $t \in [-b(\alpha), b(\alpha)]$ & $t \in [-b(\alpha), b(\alpha)]$ \\\midrule
  & $\alpha = 1$ & $b = \pi$ & $\frac{1}{2} + \frac{1}{\pi}\arcsin\left(\frac{t}{b}\right) + t f(t)$ & $\frac{1}{\pi^2} \log\left[ \frac{b + \sqrt{b^2 - t^2}}{|t|} \right]$ \\
    & $\alpha = 2$ & $b = \sqrt{2}$ & $\frac{1}{2} + \frac{1}{\pi} \arcsin\left(\frac{t}{b}\right) + \frac{t}{2} f(t)$ & $\frac{1}{\pi}\sqrt{b^2-t^2}$\\
     $w(z) = \exp(-|z|^\alpha)$ & $\alpha = 3$ & $b = \left(\frac{\pi}{2}\right)^{1/3}$ & $\frac{1}{2} + \frac{1}{\pi} \arcsin\left(\frac{t}{b}\right) + \frac{t}{3} f(t)$ & $\frac{3}{\pi^2} \left[b \sqrt{b^2 - t^2} + t^2 \log \left[ \frac{b + \sqrt{b^2 - t^2}}{|t|} \right] \right]$ \\
    $b(\alpha) = \left[2^{\alpha-1} B\left(\frac{\alpha}{2},\frac{\alpha}{2}\right)\right]^{1/\alpha}$& $\alpha = 4$ & $b = \left(\frac{4}{3}\right)^{1/4}$ & $\frac{1}{2} + \frac{1}{\pi} \arcsin\left(\frac{t}{b}\right) + \frac{t}{4} f(t) $ & $\frac{1}{\pi}\sqrt{b^2-t^2}\left( 2 t^2 + b^2 \right)$ \\
                               & $\alpha = 5$ & $b = \left(\frac{3\pi}{8}\right)^{1/5}$ & $\frac{1}{2} + \frac{1}{\pi}\arcsin\left(\frac{t}{b}\right) + \frac{t}{5} f(t)$ & $\frac{5}{3 \pi^2} \left[b \left(3 t^2 + 2 b^2\right) \sqrt{b^2 - t^2} + 3 t^4 \log\left[\frac{b + \sqrt{b^2 - t^2}}{|t|}\right]\right]$ \\
    & $\alpha = 6$ & $b = \left(\frac{16}{15}\right)^{1/6}$ & $\frac{1}{2} + \frac{1}{\pi} \arcsin\left(\frac{t}{b}\right) + \frac{t}{6} f(t)$ & $\frac{3}{8\pi} \sqrt{b^2 -t^2} \left(8 t^4 + 4 b^2 t^2 + 3 b^4\right)$ \\
  \bottomrule
  \end{tabular}
  \renewcommand{\arraystretch}{1}
  \renewcommand{\tabcolsep}{12pt}
}
\end{center}
\caption{Explicit formulae for special cases of Hermite/exponential-type asymptotic distributions \cite{ullman_orthogonal_1980}. See Table \ref{tab:summary-table}. The distribution function $F(t)$ for $\alpha=2$ is plotted in the right-hand pane of Figure \ref{fig:univariate-leja-distribution}. The densities $f(t)$ for these tabulated values of $\alpha$ are plotted in Figure \ref{fig:asymptotic-density}.}\label{tab:hermite-table}
\end{table}

\begin{figure}
\begin{center}
  \iftoggle{arxiv}{
    \resizebox{\textwidth}{!}{\includegraphics{asymptotic-density}}
  }{
    \resizebox{\textwidth}{!}{\includegraphics{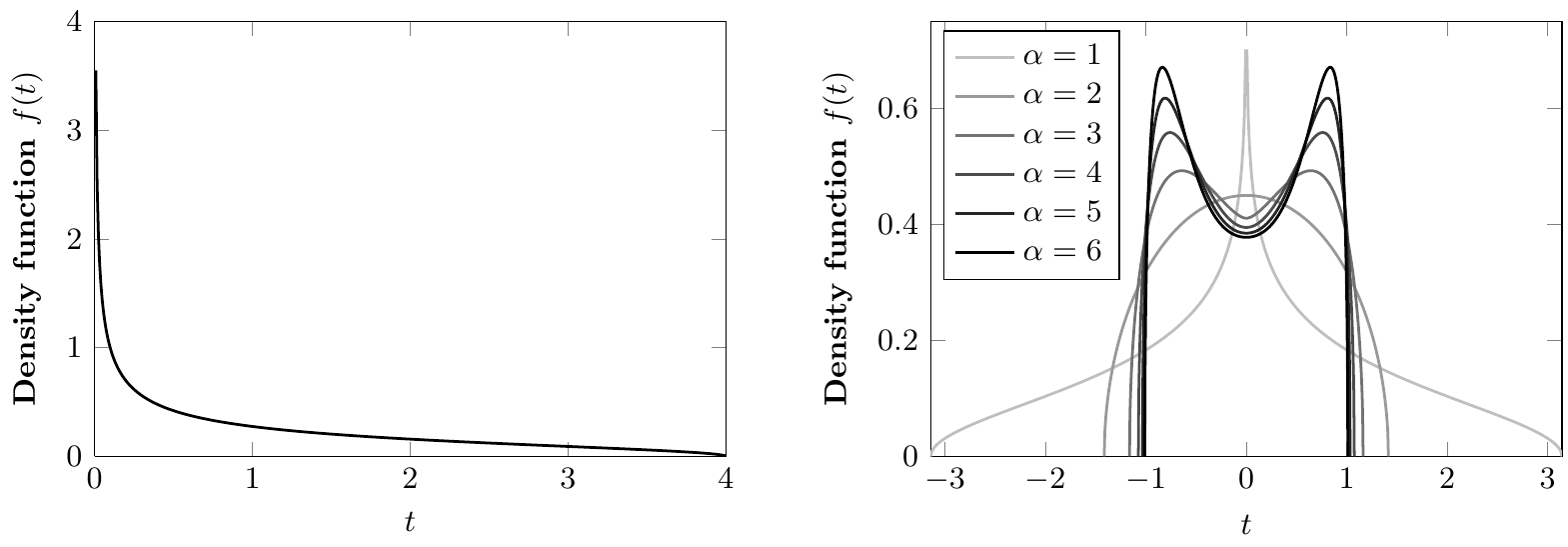}}
  }
\end{center}
\caption{Plots of asymptotic densities for contracted weighted Leja (and Gauss) points. Left: Limiting density for the Laguerre weight $w(z) = \exp(-z)$. Right: Limiting density for the exponential weight $w(z) = \exp(-|z|^\alpha)$ for various $\alpha$. See Table \ref{tab:summary-table} for a summary and Table \ref{tab:hermite-table} for explicit formulas.}\label{fig:asymptotic-density}
\end{figure}

We note that by affine scaling, the limits of weighted Leja sequences in Tables \ref{tab:summary-table} and \ref{tab:hermite-table} for other types of weights are readily derivable. I.e., suppose we have a new weight $W$ defined on a new parameter $Z$ that results from affine scaling of a canonical $w$, $z$ pair from Theorem \ref{thm:main-leja-result}:
\begin{align*}
  Z &= T(z) \triangleq A z + B, & W(Z) &= w\left(T^{-1}(Z)\right),
\end{align*}
for some $A,B \in \R$. Then the new limiting density $g(s)$ and distribution $G(s)$ for a $W$-weighted Leja sequence on the $Z$ domain $T(I)$ can be expressed in terms of the weighted Leja asymptotics in Tables \ref{tab:summary-table} and \ref{tab:hermite-table}:
\begin{align*}
  g(s) &= \frac{1}{A} f\left(T^{-1}(s)\right), & G(s) = F\left(T^{-1}(s)\right)
\end{align*}
for $s \in T(I)$. The contraction factor for $Z$ will be the same as it was for $z$.

We prove Theorem \ref{thm:main-leja-result} by leveraging a significant result from potential theory: nodal sets that are `asymptotically weighted Fekete' distribute according to the weighted potential equilibium measure \cite{saff_logarithmic_1997,totik_fast_1994}. We prove that contracted versions of weighted Leja sequences are asymptotically weighted Fekete and essentially obtain (our novel contribution to) Theorem \ref{thm:main-leja-result} as a corollary.

Let $V(\xi_0, \ldots, \xi_N)$ denote the modulus determinant of the polynomial Vandermonde matrix $W$ on any array of points $\xi_0, \ldots, \xi_N$. $W$ has entries $(W)_{j,k} = \xi_j^{k-1}$ for $j,k = 0, \ldots N$. Given a weight function $\widetilde{v}$ and a positive integer $N$ define the maximum attainable value for the following weighted determinant: 
\begin{align*}
  \delta^{(N)}_{\widetilde{v}} = \left[ \max_{(\xi_0, \ldots, \xi_N) \in I^{N+1}} V(\xi_0, \ldots, \xi_N) \prod_{j=0}^N \widetilde{v}^N(\xi_j) \right]^{2/(N^2+N)}
\end{align*}
For any fixed $N$, a cardinality-$N$ point set that achieves the maximum weighted determinant under the bracket is called a weighted Fekete set. It is known that the behavior of this maximum determinant value has a finite limit, the $\widetilde{v}$-weighted transfinite diameter of $I$:
\begin{align*}
  \delta_{\widetilde{v}} = \lim_{N \rightarrow \infty} \delta^{(N)}_{\widetilde{v}}.
\end{align*}
Any set of points whose asymptotic determinant limits to the transfinite diameter is called a set of asymptotically weighted Fekete points. In the cases of Theorem \ref{thm:main-leja-result}, the $k_N$-contracted $w$-weighted Leja points we proposed in \eqref{eq:weighted-leja-objective} are asymptotically $\widetilde{v}$-weighted Fekete.
\begin{theorem}\label{thm:asymptotically-fekete}
  In all the cases of Theorem \ref{thm:main-leja-result}, the Leja sequence $z_n$ defined by \eqref{eq:weighted-leja-objective} produces a set of points whose $k_N$-contraction is asymptotically weighted Fekete:
  \begin{align}\label{eq:leja-asymptotically-fekete}
    \lim_{N\rightarrow\infty} \left[ V(z_{0,N}, \ldots, z_{n,N}) \prod_{n=0}^N \widetilde{v}^N(z_{n,N}) \right]^{2/(N^2+N)} &= \delta_{\widetilde{v}}, & z_{n,N} = k_N z_n
  \end{align}
  where $\widetilde{v}$ is the weight function corresponding to the limit measure in Theorem \ref{thm:main-leja-result}. 
\end{theorem}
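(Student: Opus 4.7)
The plan is to reduce Theorem \ref{thm:asymptotically-fekete} to a standard weighted potential-theoretic statement about greedy point selection. The upper bound $\limsup_N \Phi_N^{2/(N^2+N)} \le \delta_{\widetilde v}$, where $\Phi_N$ denotes the bracketed quantity in \eqref{eq:leja-asymptotically-fekete}, is immediate from the definition of $\delta^{(N)}_{\widetilde v}$ together with the known convergence $\delta^{(N)}_{\widetilde v} \to \delta_{\widetilde v}$. The real content of the theorem is the matching lower bound.

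First I would verify, case by case, the elementary identity
\[
v(z/k_N) = c_N\, P(z)\, \widetilde v^N(z),
\]
where $c_N$ is a positive $z$-independent constant and $P$ is a spatial polynomial-type factor: $P(z) = |z|^\mu$ in the Hermite case, $P(z) = z^{s/2}$ in the Laguerre case, and $P = v$ (bounded and continuous) in the Jacobi case. Setting $\tilde z = k_N z$ and $\tilde z_{n,N} = k_N z_n$ inside the maximization \eqref{eq:maximizer}, the $N$-dependent constants $c_N k_N^{-(n+1)}$ cancel from both sides of the greedy inequality, leaving
\[
P(\tilde z_{n+1,N})\, \widetilde v^N(\tilde z_{n+1,N}) \prod_{k\le n}|\tilde z_{n+1,N}-\tilde z_{k,N}| \;\ge\; P(\tilde z)\, \widetilde v^N(\tilde z)\prod_{k\le n}|\tilde z - \tilde z_{k,N}|
\]
for every $\tilde z \in I$. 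This exhibits the contracted array as a $(P \cdot \widetilde v^N)$-weighted Leja array in precisely the form treated by weighted potential theory.

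Expanding the weighted Vandermonde as a product of successive step-maxima,
\[
\Phi_N = \widetilde v^N(\tilde z_{0,N}) \prod_{n=1}^N M_n^{(N)}, \qquad M_n^{(N)} \triangleq \widetilde v^N(\tilde z_{n,N})\prod_{k<n}|\tilde z_{n,N}-\tilde z_{k,N}|,
\]
the lower bound reduces to a lower bound on each $M_n^{(N)}$. I would use the greedy inequality above, tested against points sampled from the $\widetilde v$-equilibrium measure $\mu_{\widetilde v}$, and combine with the weighted-energy characterization
\[
\log \delta_{\widetilde v} = -\inf_\mu \iint \log\frac{1}{|s-t|}\, d\mu(s)\, d\mu(t) + 2\int Q\, d\mu, \qquad Q = -\log \widetilde v.
\]
Summing the resulting bounds and passing to the limit, in the spirit of Saff--Totik (Chapter III) and Totik (1994), should yield $\liminf_N \bigl(\prod_n M_n^{(N)}\bigr)^{2/(N^2+N)} \ge \delta_{\widetilde v}$. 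Combined with the elementary upper bound, this proves the theorem.

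The principal obstacle is controlling the polynomial correction $P$, which in the Hermite and Laguerre cases is non-constant and vanishes at the origin. The required estimate is that $\sum_{n=0}^N \log P(\tilde z_{n,N}) = o(N^2)$, so that upon raising $\Phi_N$ to the power $2/(N^2+N)$ the perturbation vanishes in the limit. This in turn relies on an a priori clustering estimate near $z=0$: because admissibility of $\widetilde v$ together with the contraction $k_N$ forces the greedy objective to remain bounded in a neighbourhood of the origin, only a vanishingly small fraction of the contracted Leja points can accumulate at the zero of $P$, preventing any catastrophic cancellation in the log-product.
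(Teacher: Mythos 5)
Your upper bound and your rescaling identity are fine: the observation that $v(z/k_N)\propto P(z)\,\widetilde v^N(z)$ and that the greedy inequality survives the fixed contraction is exactly the homogeneity step the paper uses (cf.\ \eqref{eq:sqrt-weight-homogeneity}), and for the Jacobi case ($k_N\equiv 1$, $\widetilde v\equiv 1$, $\tau=\delta$) your scheme does reduce to the classical unweighted argument. The gap is in the core lower bound for Hermite/Laguerre. After contracting by the \emph{fixed} factor $k_N$, the step-$n$ maximum $M_n^{(N)}$ involves the weight power $\widetilde v^N$ against a polynomial of degree only $n$, and you never say how to lower-bound these mismatched extremal quantities; this mismatch is precisely where all the difficulty sits. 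A bound of the form $M_n^{(N)}\gtrsim \delta_{\widetilde v}^{\,n}$ is not available (and is false for $n/N$ small): the bound one actually gets, by contracting step $j$ by $j^{-1/\alpha}$ so that weight power matches degree and then invoking the Chebyshev-constant bound \eqref{eq:chebyshev-constant-bound} together with the restricted-range identities \eqref{eq:supnorm-house-1}--\eqref{eq:supnorm-house-2}, is of size $(j/N)^{j/\alpha}\,\tau_{\widetilde v}^{\,j}$ up to controlled factors, which lies \emph{below} $\delta_{\widetilde v}^{\,j}$ for $j/N<e^{-1/2}$ and above it for $j/N$ near one. Only the aggregate comes out right, because $\bigl[\prod_{j\le N}(j/N)^{j}\bigr]^{2/(N^2+N)}\to e^{-1/2}$ and the Mhaskar--Saff identity \eqref{eq:hermite-chebyshev-transfinite-relation} converts $\tau_{\widetilde v}e^{-1/(2\alpha)}$ into exactly $\delta_{\widetilde v}$. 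Your proposed mechanism --- testing the greedy inequality against points sampled from $\mu_{\widetilde v}$ and invoking the energy characterization ``in the spirit of Saff--Totik'' --- supplies no device for producing this exact constant; carried out naively it yields a limit deficient by a factor of the type $\exp\bigl(-\int Q\,d\mu_{\widetilde v}\bigr)$ (cf.\ \eqref{eq:chebyshev-transfinite-relation}), which is strictly less than $\delta_{\widetilde v}$ and so does not prove the theorem.

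Your treatment of the polynomial factor $P$ also aims at the wrong estimate. Since $P(\tilde z_{j,N})=|k_N z_j|^{\mu}$ multiplies the greedy maximum but does not appear in the Fekete functional, for $\mu>0$ the danger is not accumulation of points at the zero of $P$ (points near the origin only help, as $|z_j|^{-\mu}$ is large there); what you must rule out is the selected point drifting too far out, i.e.\ you need an \emph{upper} bound on $|z_j|$. The paper obtains this from the restricted-range inclusion $z_j\in S_{M+j}$, giving \eqref{eq:zj-bound}, and this is what makes $\sum_j\log P(\tilde z_{j,N})=O(N\log N)$ rather than an unsubstantiated ``clustering estimate near the origin.'' Finally, the singular cases ($\mu<0$, and Jacobi with $\alpha,\beta<0$) require the separate observation that the singularity of the weight forces the corresponding point to be selected immediately, after which it is absorbed into the weight and the regular case applies; your sketch does not cover this. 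In short: the reformulation is sound, but the decisive step --- the sharp lower bound through step-adapted contraction, Chebyshev constants, restricted-range inequalities, and the $e^{-1/(2\alpha)}$ Riemann-sum limit --- is missing.
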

Theorem \ref{thm:asymptotically-fekete} is a stronger result than Theorem \ref{thm:main-leja-result} (see Lemma \ref{lemma:fekete-limit}) and is the result that we spend the most effort proving. Once this is established, it is well-known that asymptotically weighted Fekete points distribute according to the weighted equilibrium measure $\mu_{\widetilde{v}}$. See Section \ref{sec:proofs} for details and the proof.

\subsection{Quadrature with Leja sequences}
The construction of Leja points is motivated mainly by interpolation; however quadrature/cubature in a multidimensional sparse grid framework is very desirable. To this end, one may simply explicitly integrate an interpolant on Leja points to construct a quadrature rule. 

Consider a weighted Leja sequence $z_n$ constructed using \eqref{eq:weighted-leja-objective}. We need only integrate the interpolant constructed from data on the $z_n$. Let $\{p_n\}$, $n=0, \ldots, N-1$ denote family of polynomials orthonormal under $w(z)$. There are two observations we need: (i) if we assume that $w$ is a probability density function, then $p_0 \equiv 1$, and (ii) even if $w$ is known only empirically and does not have a representation in terms of classical functions, there are simple and accurate methods to construct the $p_n$ in one dimension \cite{kautsky_calculation_1983,gautschi_orthogonal_2004,narayan_computation_2012}. Given data $f_n$ we wish to interpolate at the sites $z_n$, so we seek the coefficients $c_n$ solving the linear problem
\begin{align*}
  \bV \bc &= \boldf, & V_{n,m} = p_{m-1}(z_n).
\end{align*}
Since $\bc = \bV^{-1} \boldf$, and 
\begin{align*}
  \int_\Omega \sum_{n=1}^N c_n p_{n-1}(z) \omega(z) \dx{z} = c_1 \int_\Omega p_0(z) \omega(z) \dx{z} = c_1,
\end{align*}
then we immediately conclude that the first row of the matrix $\bV^{-1}$ gives us quadrature weights $w_n$ defining the Leja polynomial quadrature rule
\begin{align*}
  Q^1_{N-1} f \simeq \sum_{n=1}^N w_n f(z_n).
\end{align*}
The superscript `1' indicates that this quadrature rule applies to one-dimensional functions, and the subscript $N-1$ refers to the `level' of the quadrature rule; both of these indicators unnecessary at the moment, but are meaningful in coming sections.

Naturally we wish to understand whether the Leja quadrature rules are useful in one dimension before proceeding to use them in higher-dimensional situations. We first verify that the quadrature rules are stable. \rev{The relative condition number of the quadrature rule $Q^1_{N-1}$ is given by the $\infty$-norm of the $1 \times N$ matrix $\mathbf{W}$ with entries $W_{1,n} = w_n$.} Thus, the condition number of the quadrature rule is 
\begin{align*}
  \kappa^1_{N-1} = \frac{ \sum_{n=1}^N |w_n| }{ \sum_{n=1}^N w_n} = \sum_{n=1}^N |w_n|,
\end{align*}
where the last equality holds under the assumption that the $p_n$ are orthonormal with respect to a probability density function $w$. The metric $\kappa^1$ indicates the presence of negative weights, which make the computation susceptible to catastrophic cancellation. The left-hand pane of Figure \ref{fig:quadrature-conditioning} graphs $\kappa^1_{N-1}$ for three choices of $w$: the uniform density on $[-1,1]$, an oscillatory weight $w$ on the same domain, and finally the Gaussian density function $w \propto \exp(-z^2)$ on $\R$. We see that the quadrature rules are all relatively well-conditioned. 

\begin{figure}
\begin{center}
  \iftoggle{arxiv}{
    \resizebox{\textwidth}{!}{\includegraphics{quadrature-conditioning}} 
  }{
    \resizebox{\textwidth}{!}{\includegraphics{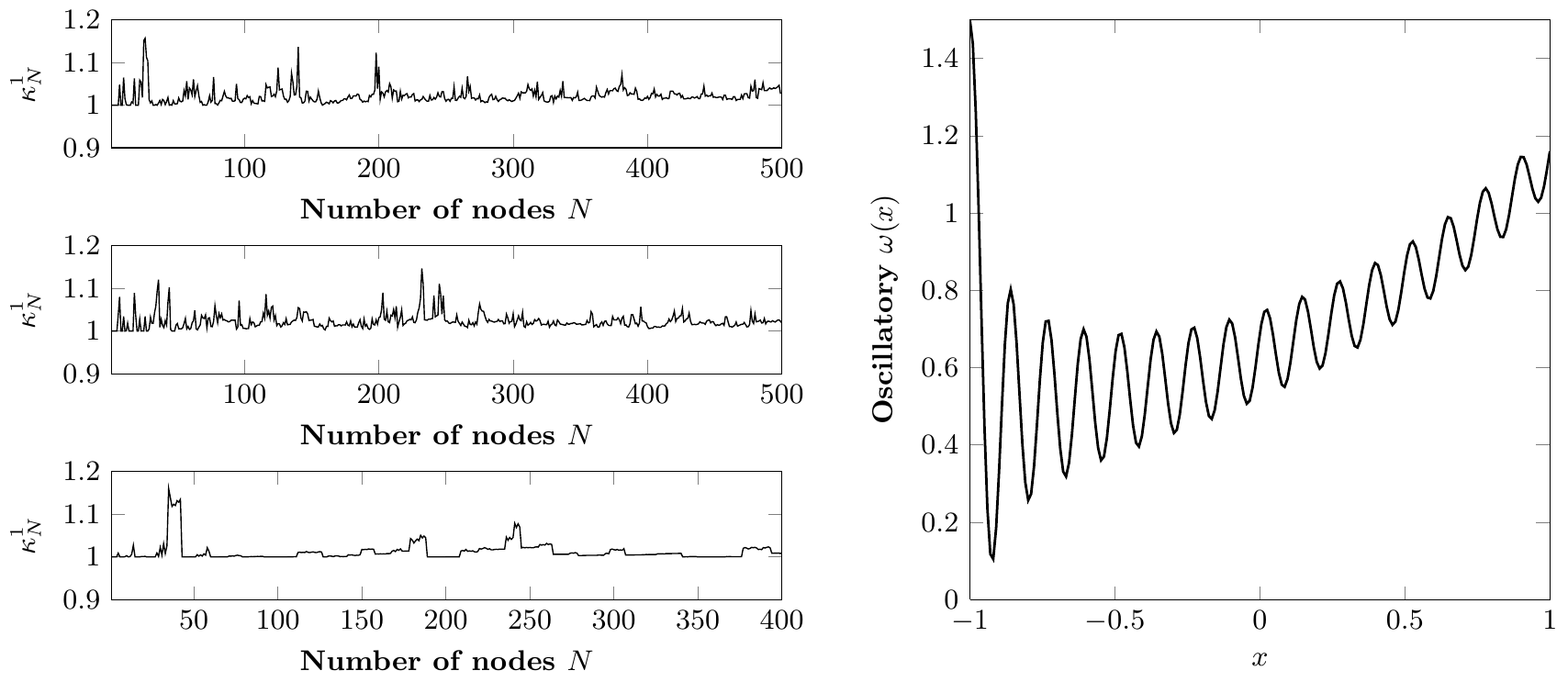}} 
  }
\end{center}
\caption{Left: Absolute condition number for Leja interpolatory quadrature rules for various weight functions $\omega$: (top) $\omega \propto 1$ the uniform density, (middle) $w(z) \propto \frac{1}{2} I_0(1+z) + J_0(50 + 50 z)$, where $J_0$ ($I_0$) is the (modified) Bessel function of the first kind, (bottom) $w(z) \propto \exp(-z^2)$ the Gaussian density. Right: plot of the second weight function, $w(z) \propto \frac{1}{2} I_0(1+z) + J_0(50 + 50 z)$.}
\label{fig:quadrature-conditioning}
\end{figure}

The accuracy of the rules are considered in the following examples:
\begin{align*}
  f_1(z) &= \frac{1}{1 + 101 \left(z - \frac{\pi}{4}\right)^2} &
  f_2(z) &= \cos \left( 1 + 100 z^3\right)
\end{align*}
We perform one-dimensional global interpolation and refinement for these functions using the (uniform) Leja grids, nested Clenshaw-Curtis grids, and \rev{Legendre}-Gauss-Patterson grids. Figure \ref{fig:d1-runge} shows results in the discrete maximum norm and the quadrature error. If $\widetilde{f}$ is the interpolatory approximation, then on a $10^4$-sized Clenshaw-Curtis grid $x_n$ with weights $v_n$, these metrics are defined as
\begin{align*}
  \sqrt{ \sum_n v_n \left(f_1(x_n) - \widetilde{f}_1(x_n) \right)^2 } & & \textrm{(Discrete $\ell^2$ error)} \\
  \max{ \left| f_1(x_n) - \widetilde{f}_1(x_n) \right| } & & \textrm{(Discrete maximum error)} \\
  \left| \sum_n v_n \left( f_1(x_n) - \widetilde{f}_1(x_n)\right) \right| & & \textrm{(Quadrature error)}
\end{align*}
The left-hand pane of Figure \ref{fig:d1-runge} shows that the Leja grid is no less accurate than any of them in the maximum norm (being as accurate as the Clenshaw-Curtis grid).
A discrete $\ell^2$ error metric behaves similarly.
The Leja sequence performs noticeably worse than the other two for the quadrature metric. We sacrifice quadrature accuracy in order to gain some dexterity in high-dimensional refinement: with the Clenshaw-Curtis or \rev{Legendre}-Gauss-Patterson grid every refinement doubles the size of the (univariate) rule, whereas with the Leja procedure we can stop refinement at any size we choose. \rev{That the Gauss-Patterson grid performs so poorly for the maximum norm approximation can be explained by the fact that Gauss-Patterson nodes are constructed only to obtain a high degree of polynomial integration, not for interpolatory approximation. Gauss-Patterson grids that are formed for nested quadrature are not necessarily good for interpolation. This can be seem by comparing the left- and right-hand panels in Figure \ref{fig:d1-runge}.}

\begin{figure}
\begin{center}
  \iftoggle{arxiv}{
    \resizebox{\textwidth}{!}{\includegraphics{d1-runge}}
  }{
    \resizebox{\textwidth}{!}{\includegraphics{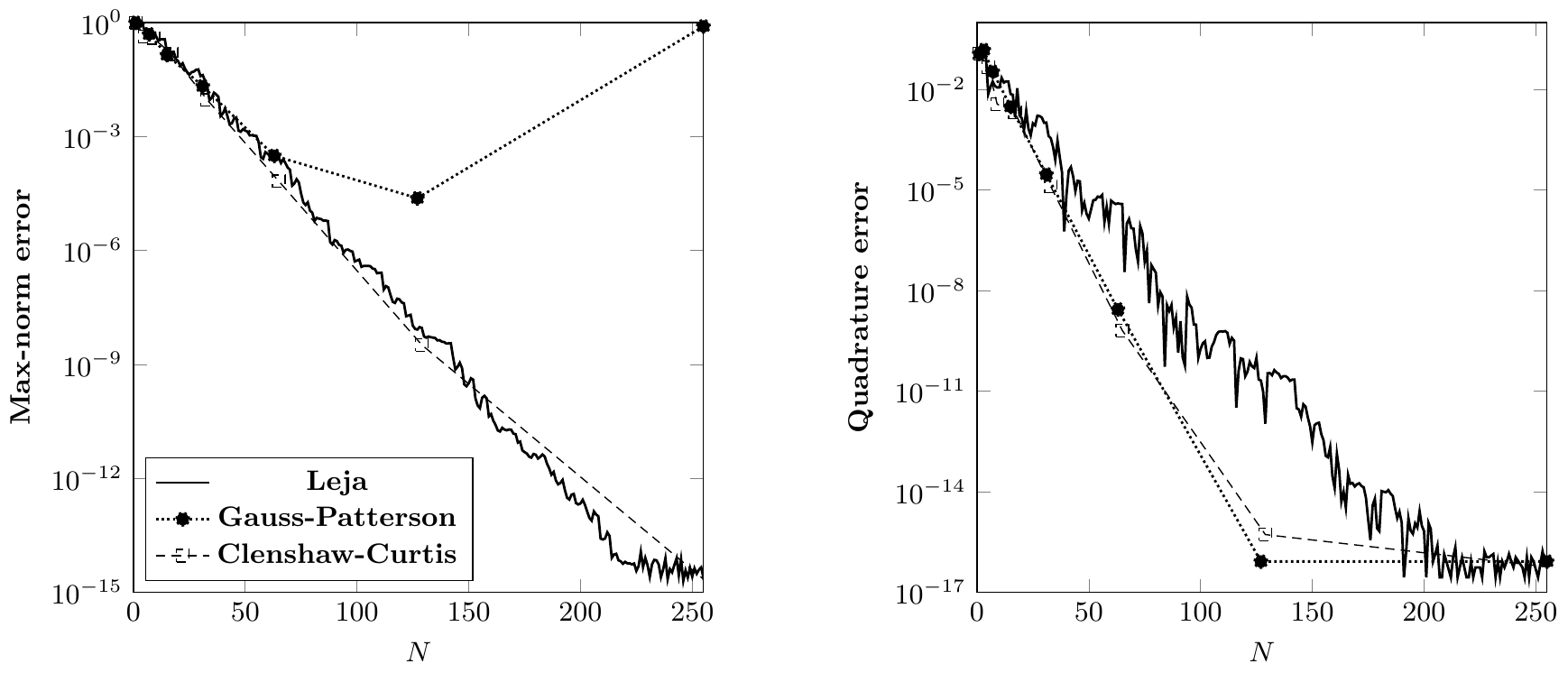}}
  }
\end{center}
\caption{Left: Maximum errors for the one-dimensional Runge function $f_1$ with Leja, Clenshaw-Curtis, and Gauss-Patterson nested refinements. Right: Maximum hierarchical surplus at each level.}
\label{fig:d1-runge}
\end{figure}

\begin{figure}
\begin{center}
  \iftoggle{arxiv}{
    \resizebox{\textwidth}{!}{\includegraphics{d1-cos-surplus}}
  }{
    \resizebox{\textwidth}{!}{\includegraphics{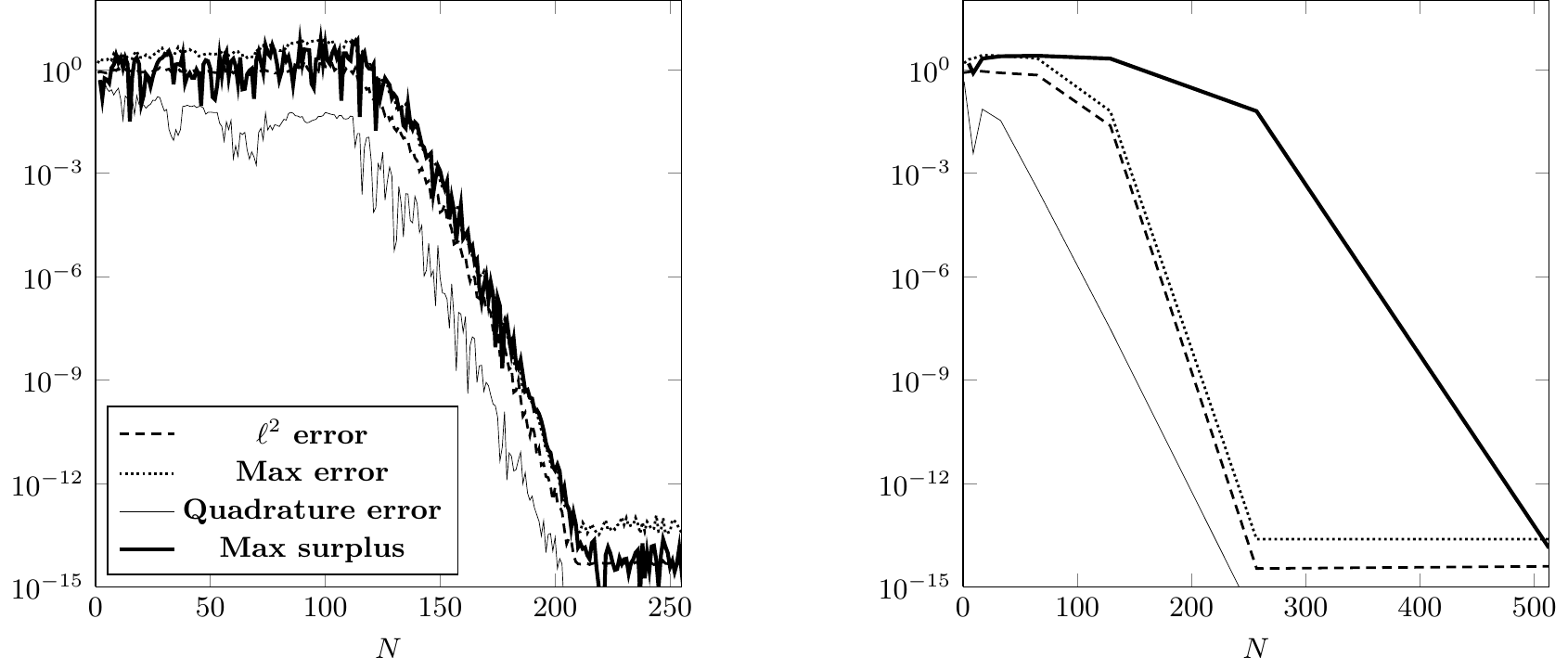}}
  }
\end{center}
\caption{Left: Errors and the hierarchical surplus at each level for a one-dimensional Leja refinement of oscillatory function $f_2$. Right: Errors and surplus using a Clenshaw-Curtis refinement.}
\label{fig:d1-cos-surplus}
\end{figure}

The relatively good behavior of a Leja grid is not useful unless we have a good error metric from the hierarchical surplus. Now consider function $f_2$, and the results in Figure \ref{fig:d1-cos-surplus}, where we show the behavior of the surplus versus all the error metrics. The left-hand pane shows that the maximum Leja surplus is an excellent indicator of error in all three norms. The right-hand pane shows similar results for the Clenshaw-Curtis grid, but two observations are apparent: first, if using the surplus as a refinement technique, the surplus when $N = 257$ does not accurately reflect the actual error at the next level $N = 513$. This is true even if one were to employ a type of Richardson extrapolation to estimate the error. Second, the surplus indicator is a very conservative estimate of the error. In such a case, it is likely that we will refine more than is necessary in order to obtain an approximation. Both of these observations do not hold for the Leja surplus, which is sharper estimate of the error, and may be refined with arbitrary size.

\subsection{Barycentric Interpolation}
A numerically robust method for computing polynomial interpolants is furnished by the Barycentric interpolation formula (see \cite{berrut_barycentric_2004} for an accessible introduction). This method is computationally efficient with respect to computing and evaluating the interpolant, and is stable so long as the interpolation problem itself is stable. 

On a set of nodes $z_1, \ldots, z_N$, the degree-$(N-1)$ polynomial interpolant of a function $f(z)$ with data $f_n = f(z_n)$ is given by 
\begin{align}\label{eq:barycentric-formula}
  p(z) = \sum_{n=1}^N f_n \ell_n(z) = \frac{\sum_{n=1}^N \frac{b_n f_n}{z - z_n}}{\sum_{n=1}^N \frac{b_n}{z-z_n}},
\end{align}
where $\ell_n(z)$ are the cardinal Lagrange interpolation basis and $b_n$ are the Barycentric weights, both defined as
\begin{align*}
  \ell_n(z) &= \prod_{m\neq n} \frac{z-z_m}{z_n - z_m}, &
  b_n & = \left[ \prod_{m\neq n} (z_n - z_m)\right]^{-1}.
\end{align*}
The Barycentric formula \eqref{eq:barycentric-formula} allows evaluation of the interpolant $p(z)$ in only $\mathcal{O}(N)$ operations once the weights $b_n$ are precomputed. In addition, the symmetry of this formula allows one to renormalize all the weights $b_n$ by the same constant without affecting the interpolant. It is known that when the interpolation nodes $z_n$ correspond to a well-conditioned interpolation operator, then the weights $b_n$ all have comparable magnitude, leading to a well-conditioned numerical procedure. 

With Leja sequences, we are essentially interested in $v$-weighted polynomial interpolation. (Recall \eqref{eq:square-root-weight}.) This means that while we want to produce a polynomial interpolant $f$, we do so by interpolating the function $v f$ using $v$-weighted polynomials. With this in mind, it is straightforward to show that the $v$-weighted analogue of \eqref{eq:barycentric-formula} is the following polynomial formula:
\begin{align}\label{eq:weighted-Barycentric-formula}
  p(z) = \frac{1}{v(z)} \sum_{n=1}^N v_n f_n \ell^v(z) = \frac{\sum_{n=1}^N \frac{b^v_n v_n f_n}{v - v_n}}{\sum_{n=1}^N \frac{b^v_n v_n}{z - z_n}},
\end{align}
where $v_n \triangleq v(z_n)$, $\ell^v$ are $v$-weighted Lagrange polynomials, and $b^v_n$ are $v$-weighted Barycentric weights:
\begin{align*}
  \ell^v_n(z) &= \frac{v(z)}{v_n} \ell_n(z), &
  b^v_n & = \frac{b_n}{v_n}. 
\end{align*}
For a sequence of Leja nodes generated according to \eqref{eq:weighted-leja-objective}, we use the Barycentric weights $b^v_n$ given above to perform interpolation. We observe in practice that with this normalization that the weights $b^v_n$ are all of comparable magnitude, just as we expect them to be for a well-conditioned interpolation problem. 

Note that we do not necessarily avoid any troublesome numerical computations in the reformulated case \eqref{eq:weighted-Barycentric-formula}; we have merely recast the problem into one that appears numerically well-conditioned. The actual process of interpolating $f$ by an unweighted polynomial on an unbounded domain will still be mathematically ill-conditioned.

}{
  
}

\section{Sparse grids with univariate Leja rules}\label{sec:sparse-grids}
\iftoggle{arxiv}{
  The approximation of a quantity depending on a finite number of Euclidean-like parameters is difficult when the parametric dimension $d$ is large.
 It is well-known that an approximation to an $r$-times differentiable function converges with a rate of $\mathcal{O}(N^{-r/d})$ \cite{bellman_dynamic_2003}. 
Let $f: \Gamma_{\bxi} \rightarrow \R$ for $\Gamma_{\bxi} \subset \R^d$ be a function that we wish to approximate and $\Gamma_{\bxi}$ be the domain of 
the possibly high-dimensional parameter $\bxi$ upon which the function depends. Spatial and temporal variables are modeled separately.
For simplicity we assume that $\Gamma_{\bxi}$ is an isotropic tensor-product domain, so that the one-dimensional restricted variables $\param_k$ with $\bxi = \left(\param_1, \ldots, \param_d\right)$ all take values on the same restricted one-dimensional space.

When $f$ is approximated via a sampling procedure, the curse of dimensionality is readily apparent: let $\Xi^1 \subset \R$ be an $n$-point nodal set in one dimension with associated quadrature weights $w_k$. A tensorization of this quadrature rule over $d$ dimensions yields the nodal set
\begin{align}\label{eq:tensor-construction}
  \Xi^d &= \bigotimes_{j=1}^d \Xi^1, & N = |\Xi^d| = n^d.
\end{align}
The growth of the size of the tensorized set $\Xi^d$ usually makes it infeasible for usage in high-dimensional approximation methods. (This is true both in cases when $d$ is not small and fixed and $n$ is increased, or when $n$ is fixed and $d$ is increased.) There are alternatives to tensor constructions, but approximation with any space-filling design requires $\mathcal{O}(n^d)$ samples, where $n$ is the number of samples `per dimension'. 

An alternative to space-filling designs is the popular \textit{sparse grid}, so named because of its geometrically dispersed distribution in $\Gamma_{\bxi}$. 
Like tensor constructions, sparse grids tensorize univariate nodal arrays, but sparse grids also attempt to delay the impact of the curse of dimensionality by taking only 
certain combinations of tensor products. We delay introduction of sparse grids until Section \ref{sec:sparse-grids}; for now we concentrate on motivating our choice of univariate rule: Leja sequences.

\subsection{Common univariate rules}
The sparse grid construction requires specification of a univariate grid $\Xi^1_l$, for $l = 0, 1, \ldots$. Several choices for these univariate grids work well, and among the most popular are a Clenshaw-Curtis (CC) grid, or grids associated with Gauss quadrature rules \cite{gerstner_numerical_1998}. For concreteness, we consider the one-dimensional finite interval $[-1,1]$. Then, for example, we may choose CC and we have the following grid for any level $l$:
\begin{align*}
  \Xi_{l} &= \left\{\param_{l,n}\right\}_{n=1}^{N_l}, & \param_{l, N_l+1-n} &= \cos \left(\frac{(n-1) \pi}{N_l-1}\right),
\end{align*}
for $n = 1, \ldots, N_l$, and $N_l = 2^l + 1$. This particular choice for $\Xi^1_l$ is popular for two reasons: (i) $\Xi_l \subset \Xi_{l+1}$ allowing for \textit{hierarchical} approximation and adaptive refinement with as few model evaluations as possible, and (ii) the sequence $\Xi_l$ is known to be both an excellent interpolatory and quadrature grid. Of course, one apparent concern is that $\left| \Xi_{l+1} \right| - \left|\Xi_l\right| = N_{l+1} - N_l = 2^l$, which grows exponentially with the level $l$. This means that each stage of refinement requires addition of a large number of points. In general, a large number of nodes is not necessarily adverse so long as the resulting grids have some optimality regarding, e.g., maximum degree of polynomial integration \cite{petras_smolyak_2003}.

An alternative univariate rule that is competitive for quadrature purposes is the Gauss-Patterson grid \cite{patterson_optimum_1968} wherein one constructs a grid that is a subset of a given Gauss quadrature grid, and satisfies some polynomial integration optimality conditions. However, it is not always possible to construct such grids depending on (i) the cardinality of the subset $X_l$ and (ii) the weight function $\omega$. Even when such construction is possible, construction of the Gauss-Patterson grid requires implementation of a nontrivial algorithm, and it is frequently easier to precompute and store the grids, making the method inflexible with respect to the choice of density $\omega$.

There are several locally adaptive strategies for sparse grids that are also successful in combating the curse of dimensionality~\cite{jakeman_localuq_2013,ma_anova_sg_2010}. One method that has enjoyed recent success is the locally adaptive, high order, generalized sparse grid construction~\cite{jakeman_localuq_2013}. In this setup, one uses a high-order Lagrange polynomial basis as the univariate building block for a local high-order polynomial approximation; because the approximation is local, targeted adaptive strategies that utilize the grid hierarchical surpluses are naturally applicable and effective. However, the adaptation is usually (locally) uniform and it is well-known that high-order polynomial approximation on a uniform grid raises computational challenges.

We propose use of (weighted) Leja sequences as univariate building blocks for an adaptive Smolyak sparse grid constructions. Leja sequences can easily add an arbitrary number of samples at each stage, and have good interpolatory and quadrature properties, making them excellent ingredients for the Smolyak algorithm. Leja sequences have been used a sparse grid building blocks before: \cite{calvi_lebesgue_2011,calvi_lagrange_2012}, but we believe this is the first investigation into adaptive hierarchical approximations for high-dimensional approximation.


Having discussed univariate Leja sequences at length in Section \ref{sec:leja}, we may now construct standard sparse grids using Leja sequences as building blocks. Let $\bXi = \left(\param_1, \ldots, \param_d\right) \in I_{\bxi} \subseteq \R^d$ be a random variable with probability density function $\omega(\bxi): I_{\bxi} \rightarrow \R$, and assume that the components of $X$ are mutually independent so that $I_{\bxi}$ is a tensor-product domain 
\begin{align*}
  I_{\bxi} = \bigotimes_{j=1}^d I_{\param_j}. 
\end{align*}
We let $\omega_i: I_{\param_j} \rightarrow \R$ denote the marginal PDF of $\param_i$, so that $\omega(\bxi) = \prod_{j=1}^d \omega^j(\param_j)$. Sparse grids~\cite{bungartz04} approximate $\qoih$ via a 
weighted linear combination of basis functions
\begin{equation}\label{eq:sgInterpolantsum}
\intp{n}{\qoih} := \qoinh = \sum_{k=1}^n v_{k}\, \Psi_{k}(\bxi)
\end{equation}
The approximation is constructed on a set of anisotropic grids $\Xi_\bl$ on
the domain $\dom$ 
where $\bl=(l_1,\ldots,l_d)\in \mathbb{N}^d$ is a multi-index denoting the level
of refinement 
of the grid in each dimension. These rectangular grids are Cartesian product
of nested one-dimensional grid points 
$\Xi_{l}=\{\xi_{l,i}:i<0\le i \le m_{l}\}$
\[ \Xi_\bl = \Xi_{l_1} \times \cdots \Xi_{l_d}
\]
The number of points $m_{l}$ of a one-dimensional 
grid of a given level is dependent on the growth rate of the quadrature rule chosen.

The multivariate basis functions $\Psi_k$  are a tensor product of one
dimensional basis functions. 
Adopting the multi-index notation used above we have
\begin{equation}
\label{eq:multi-dimensional-basis} 
\Psi_{\bl,\bi}(\bxi)=\prod_{n=1}^d \psi_{\ell_n,i_n}(z_n)
\end{equation}
where $\bi$ determines the location of a given grid point. There is a one-to-one 
relationship between $\Psi_k$
in~\eqref{eq:sgInterpolantsum} and $\Psi_{\bl,\bi}$ and each $\Psi_{\bl,\bi}$
is 
uniquely associated with a grid point 
$\bxi_{\bl,\bi}=(\xi_{\ell_1,i_1},\ldots,\xi_{\ell_d,i_d})\in \Xi_\bl$. 
Many different one-dimensional basis functions $\psi_{\ell_n,i_n}(\xi_n)$ 
can be used. In the following we employ one-dimensional Lagrange polynomials for the functions $\psi_{\ell_n,i_n}$.

The multi-dimensional basis~\eqref{eq:multi-dimensional-basis} spans 
the discrete space $V_\bl \subset L^2(\dom)$
\begin{align*}
  V_\bl&=\text{span}\left\{ \Psi_{\bl,\bi}\,:\bi\in\cK_\bl\right\} & 
  \cK_\bl&=\{\bi \in \N_0^d :i_k=0,\ldots,m_{l_k}\,,k=1,\ldots,d\}
\end{align*}
These discrete spaces can be further decomposed into 
hierarchical difference spaces
\[
 W_\bl={V_\bl} \setminus V_{\bl}\, \bigoplus_{n=0}^d V_{\bl-\be_n}
\]
The subspaces $W_\bl$ consists of all basis functions $\Psi_{\bl,\bi} \in V_\bl$
which are not included in any of the spaces $V_\bk$ smaller than $V_\bl$, 
i.e. with $\bk<\bl$ with $<$ the lexicographic partial ordering on multi-indices.
These hierarchical difference spaces can be used to decompose the input space
such that 
\begin{equation*}
\label{eq:vspace}
V_\bl=\bigoplus_{\bk\le \bl}W_\bl\quad\text{and}\quad
 L^2(\dom)=\bigoplus_{k_1=0}^{\infty}\cdots\bigoplus_{k_d=0}^{\infty}
W_\bk=\bigoplus_{\bk\in\mathbb{R}^d}W_\bk
\end{equation*}

For numerical purposes we must truncate the number of difference spaces used to
construct $V$.  Traditional isotropic 
sparse grids can be obtained by all hierarchical subspaces $W_\bl$ with
and index set that satisfy
\begin{equation}
\label{eq:sg-isotropic-index-set}
 \cL = \{\bl:|\bl|_1\le l\}
\end{equation}

Given a truncation, such as the \textit{a priori} one above or one which has been determined adaptively,
$\qoih$ can be approximated by
\begin{align}\label{eq:sgInterpolant}
  \qoinh&=\sum_{\bl\in\cL}  \qoih_\bl,& 
  \qoih_\bl&= \sum_{\bi\in\cI_\bl} v_{\bl,\bi}\, \Psi_{\bl,\bi}(\bxi)
\end{align}
where $\cI_\bl = \{\bi:\Psi_{\bl,\bi}\in W_\bl \}$.

Here we note that the $v_{\Bell,\bj}$ are the coefficient values of the hierarchical product 
basis, also known as the hierarchical surplus. The surpluses are simply 
the difference between the function value and the sparse grid approximation at
a point, not already in the sparse grid. That is 
\begin{align*}
  v_{\bl,\bj} &= \qoih(\bxi_{\bl,\bi}) - \qoinh(\bxi_{\bl,\bi}),&
  \cL \cap \bl &= \emptyset
\end{align*}
The particular choice of sparse grid in this paper is one constructed with univariate hierarchical Leja points: the sequence of points that we use for each dimension to evaluate the surplus and construct the interpolant is a univariate Leja sequence. In this way, the point sets $\bxi_{\bl,\bi}$ are nested, and the number of points to add at each level can be as small or large as we wish. (I.e. the Leja choice allows a great deal of granularity for refinement.)

\subsection{Dimension adaptivity}

\rev{The dimensional adaptivity of our algorithm in this section is based on the idea presented in \cite{gerstner_numerical_1998}}. We begin with a low-level isotropic sparse grid approximation with a set of subspaces representing the current approximation $\cL$ and the set of active subspaces $\cA$ that indicate the levels for potential refinement.  Often $\cL=W_{\bzero}$ and $\cA=\{W_{\be_k},k=1\ldots,d\}$.  We then choose $W_\bl \in \cA$ with the largest error indicator $\gamma_\bl$ and refine that subspace. Here we define the error indicator $\gamma_\bl$ as
\begin{align}
\label{eq:dim-surplus-indicator}
  \gamma_\bl &= \int_{\Gamma_{\bxi}} (\qoih_\bl)^2 d\omega(\bxi) - \left(\int_{\Gamma_{\bxi}} (\qoih_\bl) d\omega(\bxi)\right)^2, & 
  \eta &= \sum_{\bl\in\cA} \gamma_\bl
\end{align}
The indicator $\gamma_\bl$ measures the contribution of the subspace $\bl$ to the variance of $f_n$ and the global indicator $\eta$ measures the contribution of all active subspaces
to the variance of $f_n$. These indicators are calculated by transforming the Lagrange interpolant on each {\it hierarchical} subspace into a Polynomial Chaos Expansion 
that is orthogonal to the (possibly mixed) distribution weight $\omega(\bxi)$. The cost of this transformation is linear in terms of the number of subspace points~\cite{buzzard2013}. The chosen subspace for refinement with index $\Bell$ is refined by adding all indices $W_\bk$ with 
$\bk = \bl+\be_n$, $n=1,\ldots,d$ that satisfy the following admissibility criterion
\begin{equation}
\label{eq:gsg_admissibility}
\bl-\mathbf{e}_k\in\cL\text{ for }1\le k\le d,\, l_k > 1
\end{equation}
The active set $\cA$ is then rebuilt by adding each subspace corresponding to the indices  from~\eqref{eq:gsg_admissibility}. This
process continues until a computational budget limiting the number of model samples
(grid points) is reached or a global error indicator drops below a predefined
threshold. Pseudo-code for the dimension adaptive algorithm is shown in Algorithm~\ref{alg:dim-adaptivity}. 

The \texttt{INDICATOR} and \texttt{TERMINATE} routines in Algorithm~\ref{alg:dim-adaptivity} control which subspaces are added
to the sparse grid via the use of a subspace error and global error metric. The indicators respectively provide
estimates of the contribution of a subspace to reducing the error in the
interpolant, and the error in the entire interpolant.

\begin{algorithm}
\caption{\texttt{INTERPOLATE}[$f(\brv)$,$\cL$,$\cA$,$\tau$,$n$]$\rightarrow f_{n}$}
\label{alg:dim-adaptivity}
\begin{algorithmic}[1]
\STATE For a given $\cL$ the points in the sparse grid are $\Xi:= \bigcup_{\bl\in\cL} \Xi_\bl$.\\
\STATE The number of sparse grid points are $N=\#\Xi$\\
\WHILE {NOT \texttt{TERMINATE}[$\cA$,$N$,$\tau$,$n$]}
 \STATE $W:= \argmax_{W_\bl\in\cA} \gamma_\bl$ {\footnotesize\% Determine the subspace with the highest priority}
 \STATE $\cA:=\cA\setminus W$ {\footnotesize\% Remove $W$ from the active set}
 \STATE $\cL:= \cL \cup W$ 
 \STATE $\cJ:=$ \texttt{REFINE}[$W$,$\cL$] {\footnotesize\% Find all admissible forward neighbors of $W$}
 \STATE $\gamma_{\bl}:= $ \texttt{INDICATOR}[$W_\bl$]$\;\forall\; W_\bl\in\cJ$    {\footnotesize\% Calculate the priority of the neighbors}
 \STATE $\cA:= \cA \cup \cJ$ {\footnotesize\% Add the forward neighbors to the active index set}
\ENDWHILE
\end{algorithmic}
\end{algorithm}

}{
  
}

\section{Numerical examples}\label{sec:results}
\iftoggle{arxiv}{
  We consider several multidimensional examples below that compare the Smolyak-Leja algorithm with a more standard Clenshaw-Curtis-Smolyak algorithm. Effectively, we see that the Leja construction is competitive (usually superior) to Clenshaw-Curtis when an interpolation metric is used. However, they appear suboptimal when a quadrature metric is evaluated. This is not surprising as Leja sequences are constructed with the goal of interpolation and not necessarily for quadrature.

Throughout these examples we compute discrete $\ell_2$ errors $\varepsilon_{\ell_2}$ using 100,000 random samples taken in a Monte-Carlo fashion from the distribution of the input variable $\bxi$.  \rev{We also report absolute errors in the sparse grid mean $\varepsilon_{\mu}$ and variance $\varepsilon_{\sigma^2}$ where the exact moments are computed using a high-resolution sparse grid that was refined so that its $\ell_2$ error was in the order of machine precision. The error metric $\varepsilon_{\ell^2}$ is simply the discrete $\ell^2$ error (RMSE).}

\subsection{Random oscillator}
This section investigates the relative performance of the sparse grids 
when approximating the output from a model of linear oscillator subject to external forcing with six unknown 
parameters. That is,
\begin{equation}\label{eq:oscillator_ode}
\frac{d^2x}{dt^2}(t,\bz)+\gamma\frac{dx}{dt}+k x=f\cos(\omega t),
\end{equation}
subject to the initial conditions
\begin{equation}
x(0)=x_0,\quad \dot{x}(0)=x_1,
\end{equation}
where we assume the damping coefficient $\gamma$, spring constant $k$,
forcing amplitude $f$ and frequency $\omega$, and the initial
conditions $x_0$ and $x_1$ are all uncertain. We solve~\eqref{eq:oscillator_ode} analytically
to allow us to avoid consideration of discretization errors in our investigation.

Let us choose our quantity of interest to be the position $x(T)$ of the osciallator at $T=20$ seconds and
let $\bz=(\gamma,k,f,\omega,x_0,x_1)$ where $\gamma\in[0.08,0.12]$, $k\in[0.03,0.04]$, $f\in[0.08,0.12]$, $\omega \in [0.8,1.2]$, 
$x_0\in[0.45,0.55]$, $x_1\in[-0.05,0.05]$. For this choice of random parameters any parameter realization in $I_\bz$ will produce 
an underdamped harmonic oscillator.

Figure~\ref{fig:oscillator-d-6-rmse-convergence} compares the $\ell_2$ accuracy in the sparse grid interpolants obtained using Clenshaw-Curtis nodes 
and Leja nodes. Although both univariate rules have similar interpolation properties in one-dimension, the one-at-a-time nestedness of the Leja rule
produces, in this higher-dimensional setting, an approximation that is significantly more accurate than the approximation based upon the Clenshaw-Curtis quadrature rule.

\begin{figure}[ht]
\centering
\includegraphics[width=0.49\textwidth]{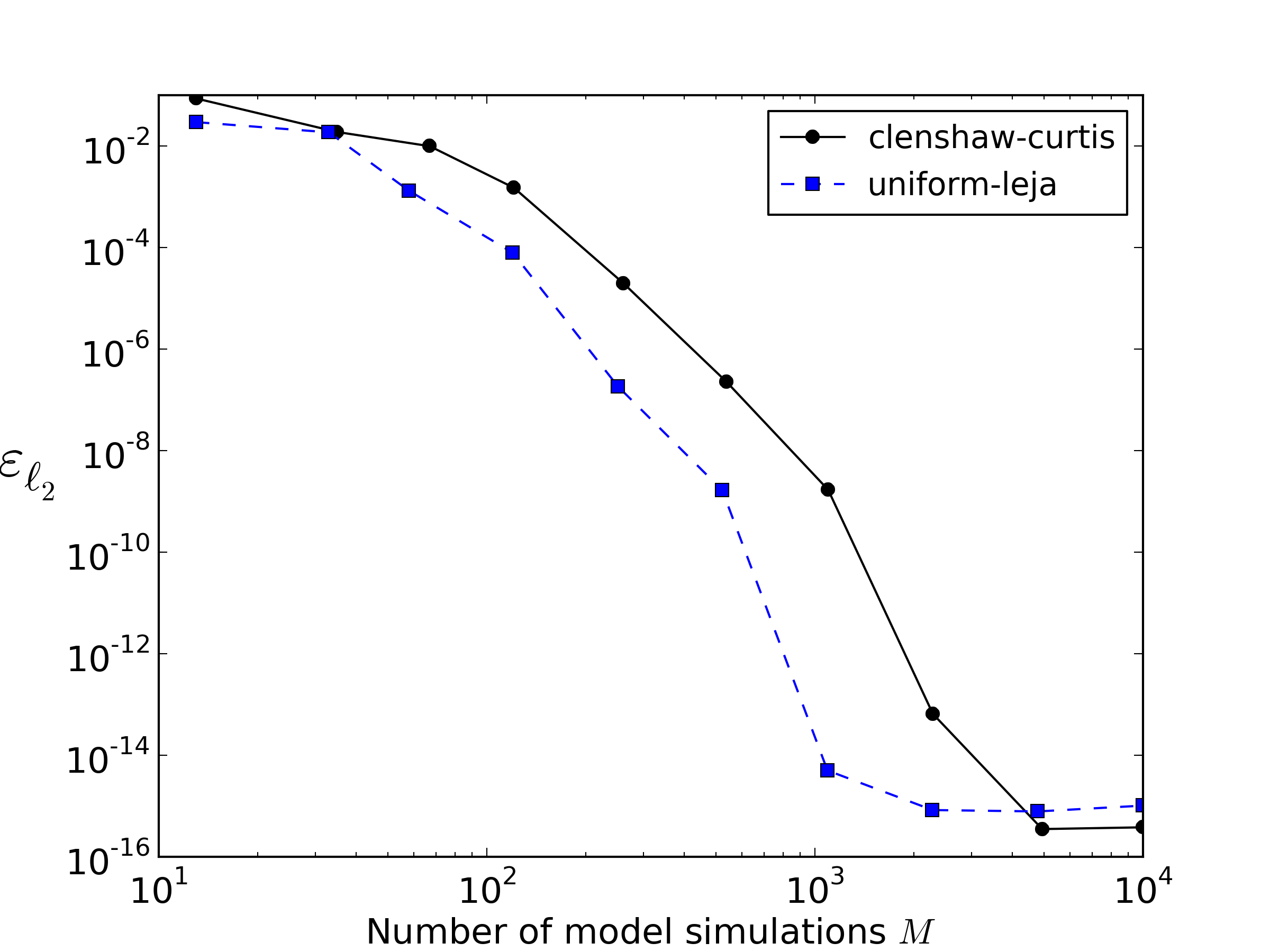}
\caption{Convergence of RMSE in the sparse grid approximation of the oscillator position with respect to the number of model evaluations.}
\label{fig:oscillator-d-6-rmse-convergence}
\end{figure}

\subsection{Borehole model}
For the next numerical demonstration, consider the following model of water flow through a borehole
\begin{equation}
\label{eq:borehole}
f(\rv) = \frac{2\pi T_u(H_u-H_l)}{\log(r/r_w)\left(1+\frac{2\pi T_u}{\log(r/r_w)r^2_wK_w}+
\frac{T_u}{T_l}\right)}
\end{equation}
where the unkonwn parameters are uniform random variables $\brv=(\rv_1,\ldots,\rv_8)$ with the following bounds: $\rv_1:=r_w \in [0.05, 0.15]$ (meters) denotes the radius of borehole, 
$\rv_2:=r \in [100, 50000]$ (meters) the radius of influence, $\rv_3:=T_u \in [63070, 115600]$ (meters$^2$/years) the transmissivity of upper aquifer,
$\rv_4:=H_u \in [990, 1110]$ (meters) the potentiometric head of upper aquifer,
$\rv_5:=T_l \in [63.1, 116]$ (meters$^2$/years) the transmissivity of lower aquifer,
$\rv_6:=H_l \in [700, 820]$ (meters) the potentiometric head of lower aquifer,
$\rv_7:=L \in [1120, 1680]$ (meters) the length of borehole, and 
$\rv_8:=K_w \in [9855, 12045]$ (meters/year) the hydraulic conductivity of borehole.

Figure~\ref{fig:borehole-d-40-rmse-convergence} compares the accuracy of sparse grids based upon the univariate Clenshaw-Curtis and Leja nodes. Again the
Leja interpolation sequence produces a more accurate interpolant for a given number of function evaluations, but suffers when evaluating quadrature quantities such as the mean shown in the left-hand pane. 
In this case as with many others, the 
Clenshaw-Curtis quadrature rule produces a more accurate estimate of the mean of the function. This statement is consistent with the one-dimensional results shown in 
Figure~\ref{fig:d1-runge}.

\begin{figure}[ht]
\centering
\includegraphics[width=0.49\textwidth]{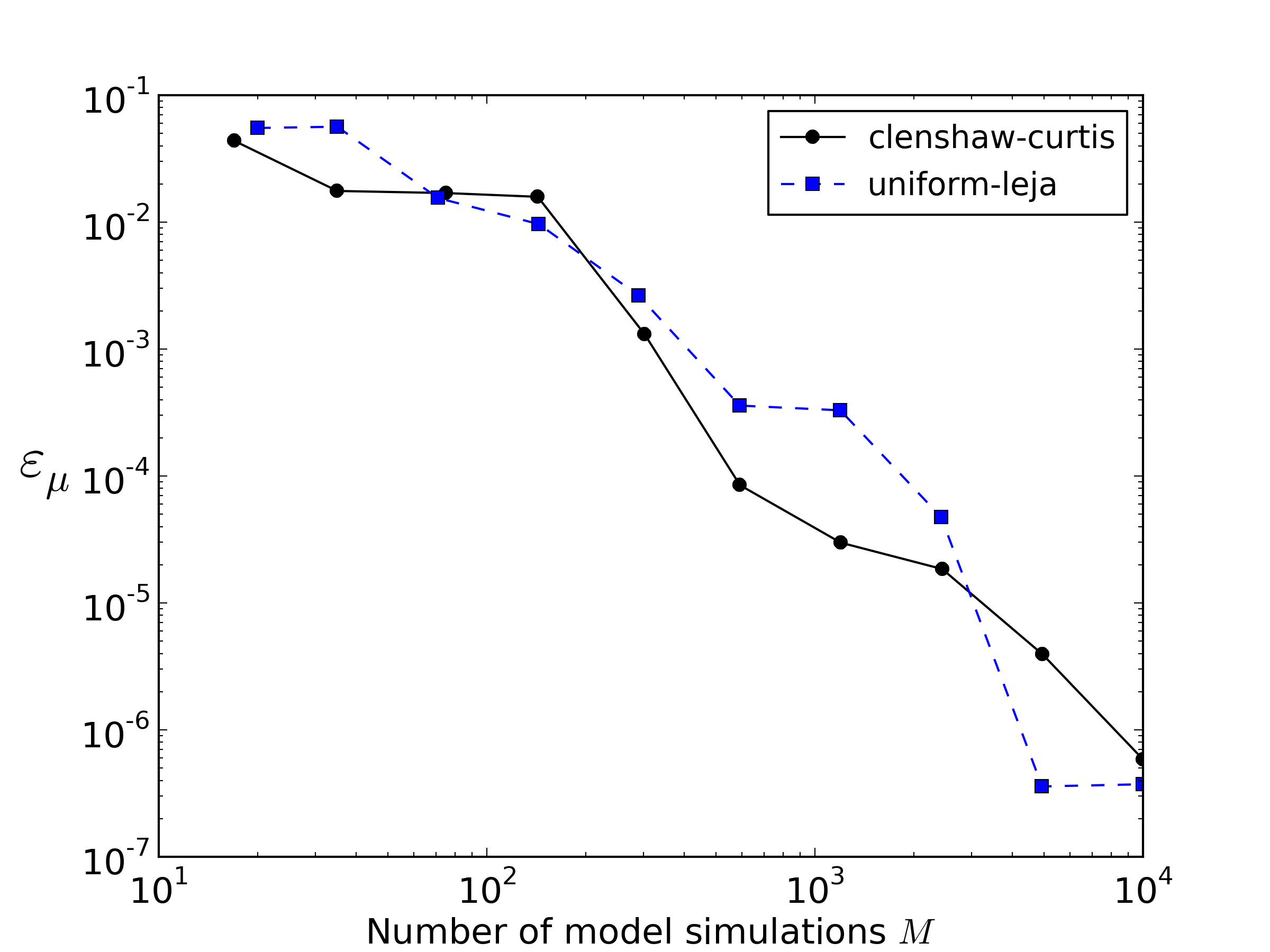}
\includegraphics[width=0.49\textwidth]{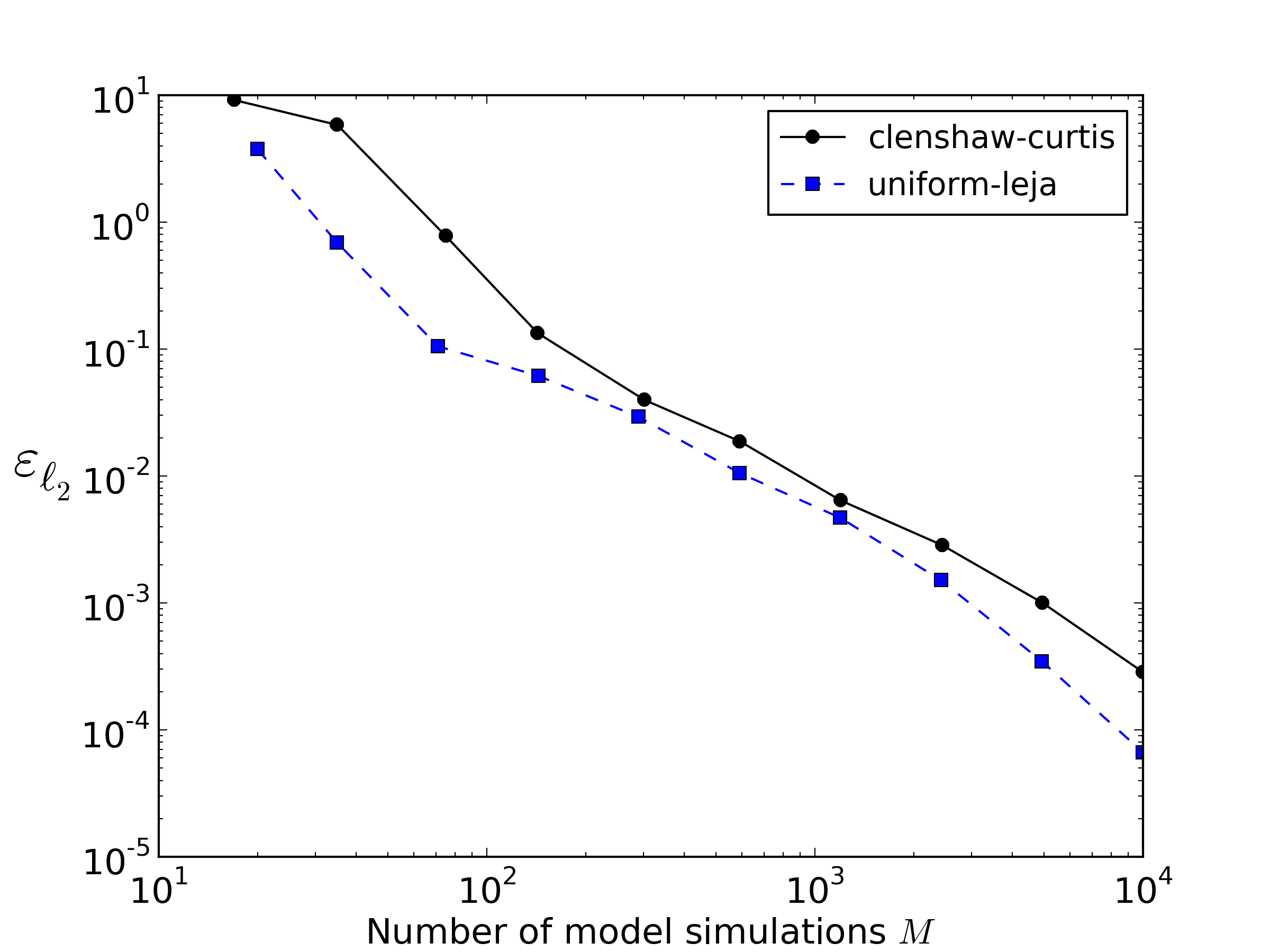}
\caption{Convergence of the mean and RMSE in the sparse grid approximation of the borehole model with respect to the number of model evaluations.}
\label{fig:borehole-d-40-rmse-convergence}
\end{figure}

\subsection{Heterogeneous diffusion equation}
In this section, we consider the heterogeneous diffusion equation in one-spatial 
dimension subject to uncertainty in the diffusivity coefficient. 
For $d \ge 1$ random dimensions:
\begin{equation}\label{eq:hetrogeneous-diffusion}
-\frac{d}{dx}\left[a(x,\bz)\frac{du}{dx}(x,\bz)\right]=1,\quad 
(x,\bz)\in(0,1)\times I_\bz
\end{equation}
subject to the physical boundary conditions
\begin{equation}
u(0)=0,\quad u(1)=0
\end{equation}
Furthermore assume that the random diffusivity satisfies
\begin{equation}\label{eq:diffusivityZ}
a(x,\brv)=\bar{a}+\sigma_a\sum_{k=1}^d\sqrt{\lambda_k}\phi_k(x)z_k
\end{equation} 
where $\{\lambda_k\}_{k=1}^d$ and $\{\phi_k(x)\}_{k=1}^d$ are, respectively, 
the eigenvalues and eigenfunctions of the covariance kernel 
\[
 C_a(x_1,x_2) = \exp\left[-\frac{(x_1-x_2)^2}{l_c^2}\right]
\]
The variability of the diffusivity field~\eqref{eq:diffusivityZ} is 
controlled by $\sigma_a$ and the correlation length
$l_c$ which determines the decay of the eigenvalues $\lambda_k$. Here we wish to approximate
the solution $u(1/3,\brv)$ when
$d=40$, $\sigma_a=0.021$ and $l_c=1/14$
and $z_k\in[-1,1]$, $k=1,\ldots,40$ to be independent and uniformly distributed random variables.
\begin{figure}[ht]
\centering
\includegraphics[width=0.49\textwidth]{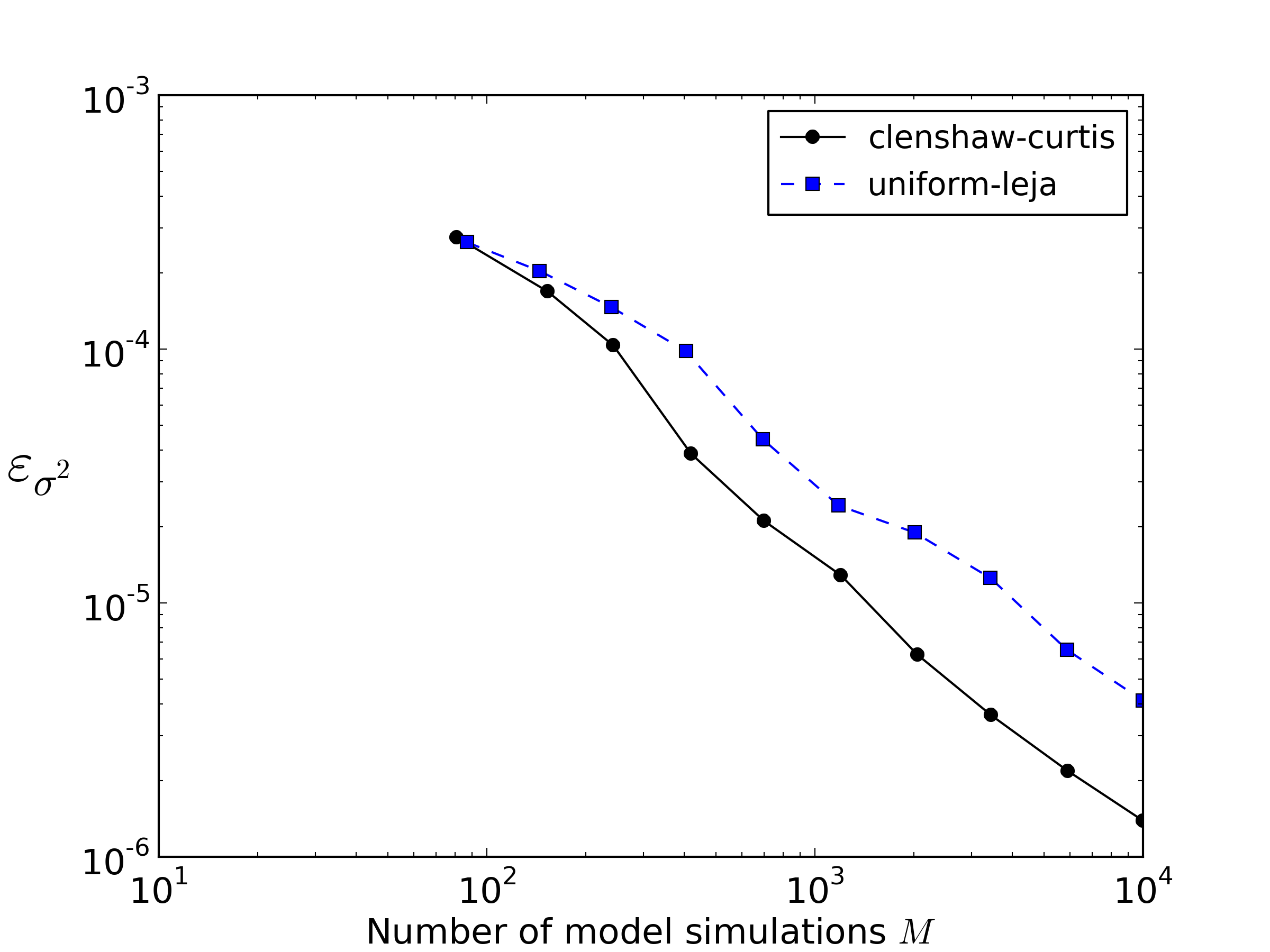}
\includegraphics[width=0.49\textwidth]{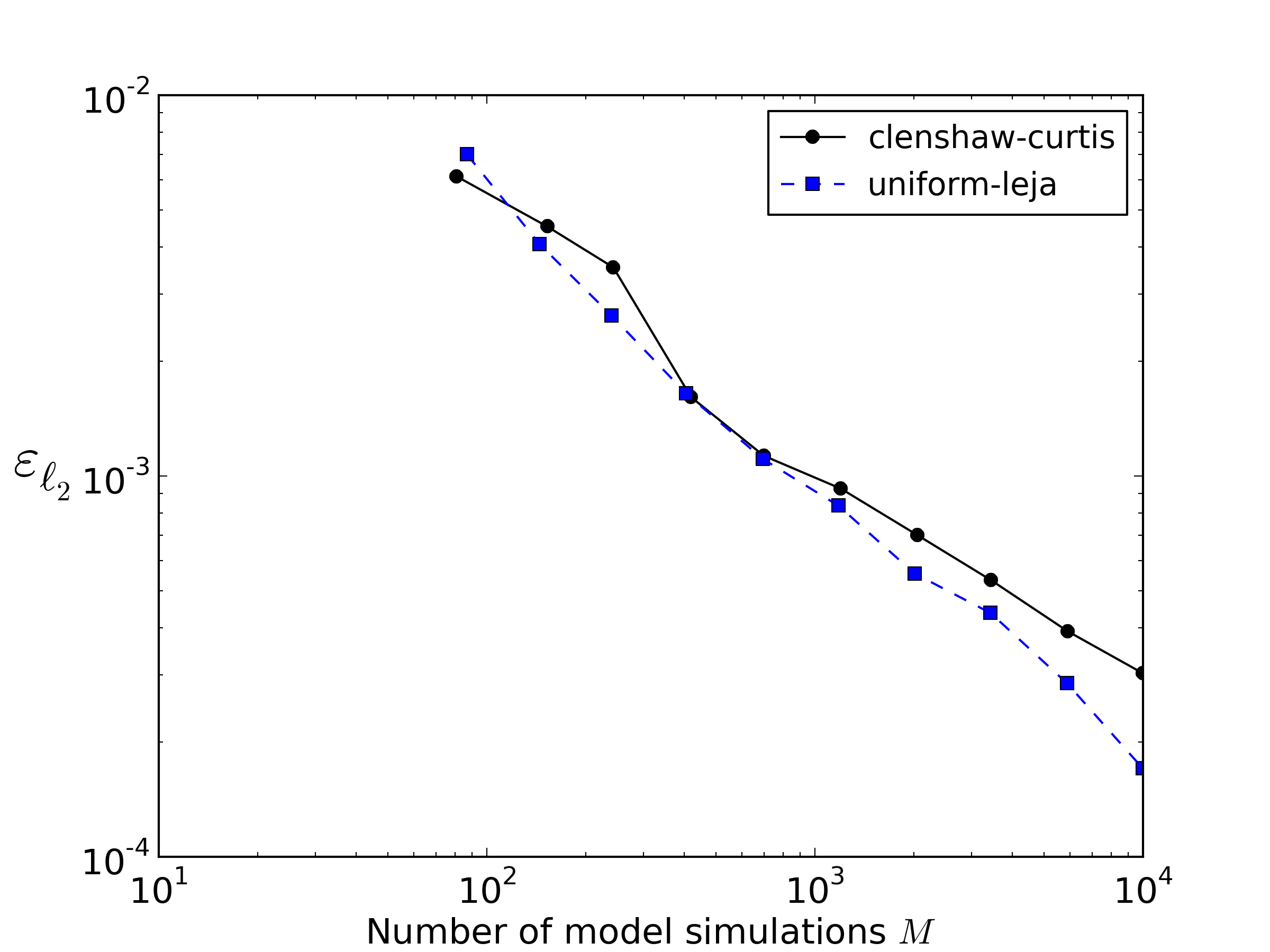}
\caption{Convergence of variance and RMSE in the sparse grid approximation of the solution to the diffusion equation with respect to the number of model evaluations} 
\label{fig:steady-heat-rmse-convergence}
\end{figure}
We solve \eqref{eq:diffusivityZ} non-intrusively: for each node $\bxi_{\ell,\i}$ on our sparse grid, we use a finite-element discretization in $x$ to compute the solution. The comparison between Leja and Clenshaw-Curtis Smolyak construction is shown in Figure \ref{fig:steady-heat-rmse-convergence}. In this case, the Leja construction only performs marginally better than the CC approach for interpolation, and exhibits a now-familiar difficulty with quadrature. We explain this difference in the following way: for this equation, we certainly have dimensional anisotropy because the eigenvalues $\lambda_k$ decay. However, if we plot the parameter indices $\l$ for the subspaces $W_{\Bell}$ that contribute significantly to the solution, we will see an ellipsoid shape in index space. Thus, extra refinement performed by CC in certain directions is not wasted because these degrees of freedom can properly resolve mixed terms in parameter space. In this example, the granularity offered by Leja sequences is not needed or utilized.

\subsection{Resistor network}
\begin{figure}
\includegraphics[width=0.95\textwidth]{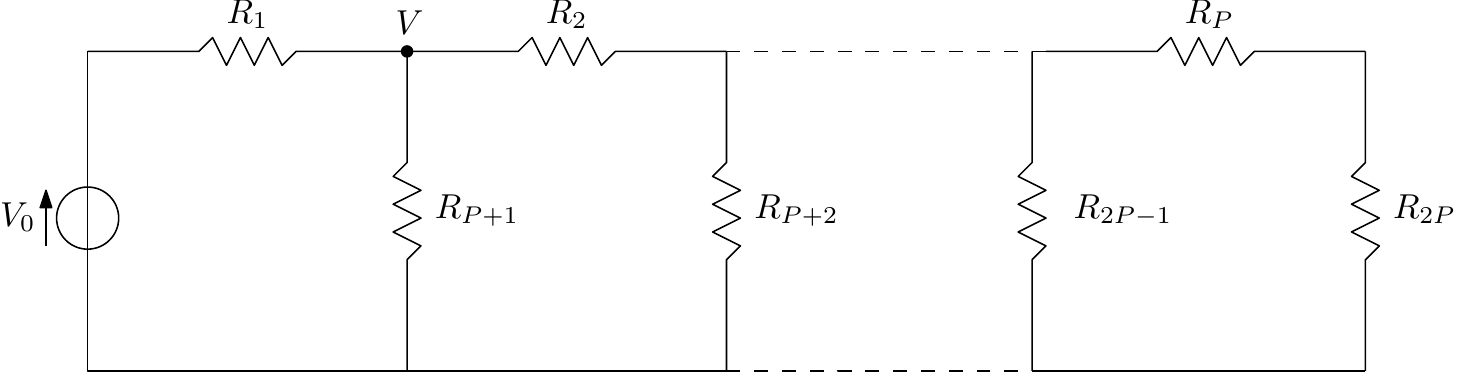}
\caption{Resistor network}\label{fig:resistor-network} 
\end{figure}

Consider the electrical resistor network shown in Figure~\ref{fig:resistor-network} \cite{roland}. 
The network is comprised of $d=2P$ resistances of uncertain
ohmage and the network is driven by a voltage source providing a
known potential $V_0$.  We are interested in determining how the voltage $V$ shown in 
Figure~\ref{fig:resistor-network} depends on the $d=2P$
resistances, which we take as random parameters 
that are independent and identically distributed Gaussian random variables with mean $\mu=1$ and standard deviation $\sigma=0.005$. Note for this value of $\sigma$, the probability that we encounter negative resistances is extremely small, and so apart from the obvious modeling error of possible negative resistances, no numerical difficulties are introduced. I.e. none of the sparse grid points or random samples used to generate the error resulted in a negative resistance. In this example we set $d=40$ and set the reference potential $V_0 = 1$. 

\begin{figure}[ht]
\begin{center}
  \includegraphics[width=0.49\textwidth]{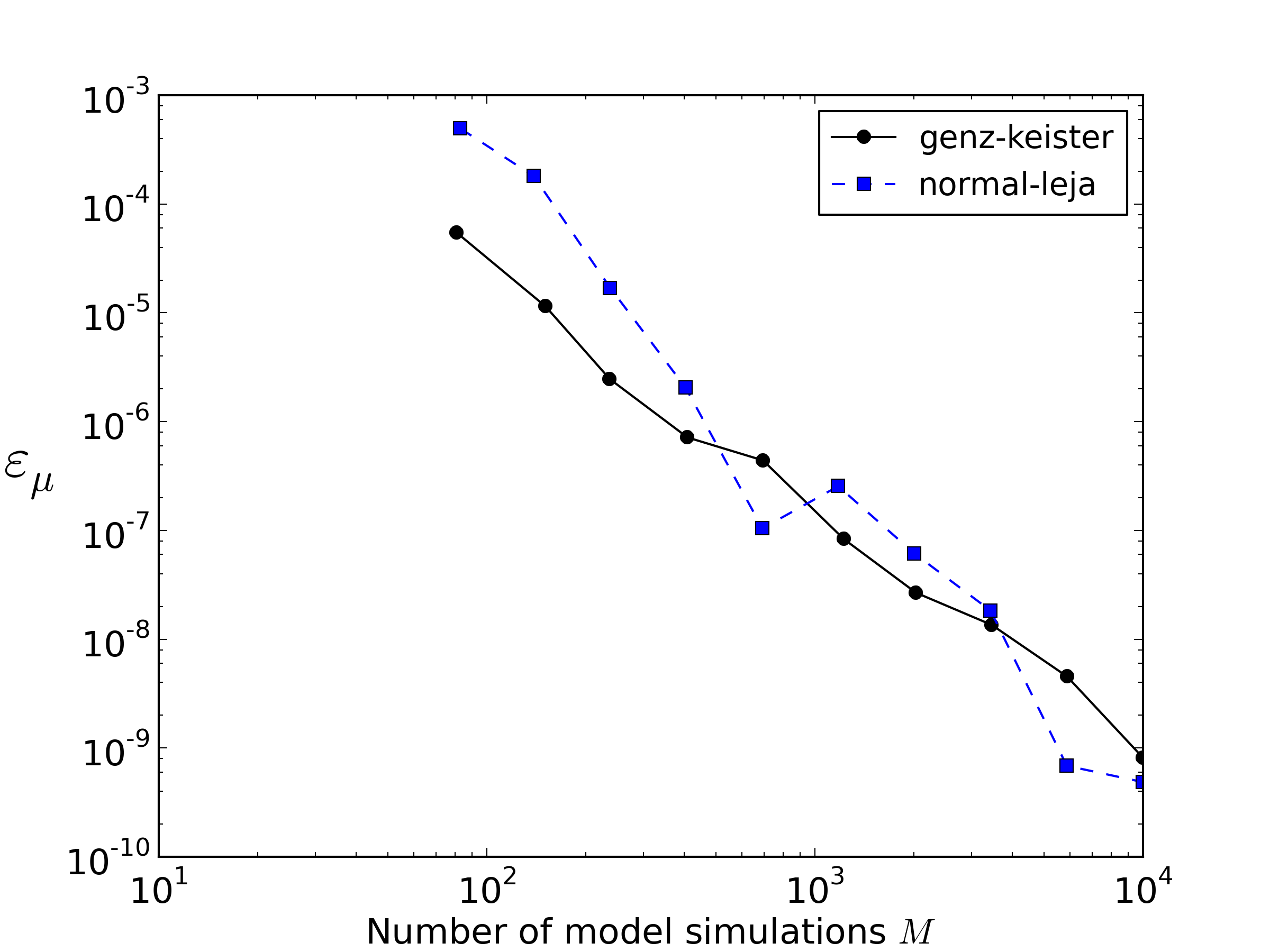}
  \includegraphics[width=0.49\textwidth]{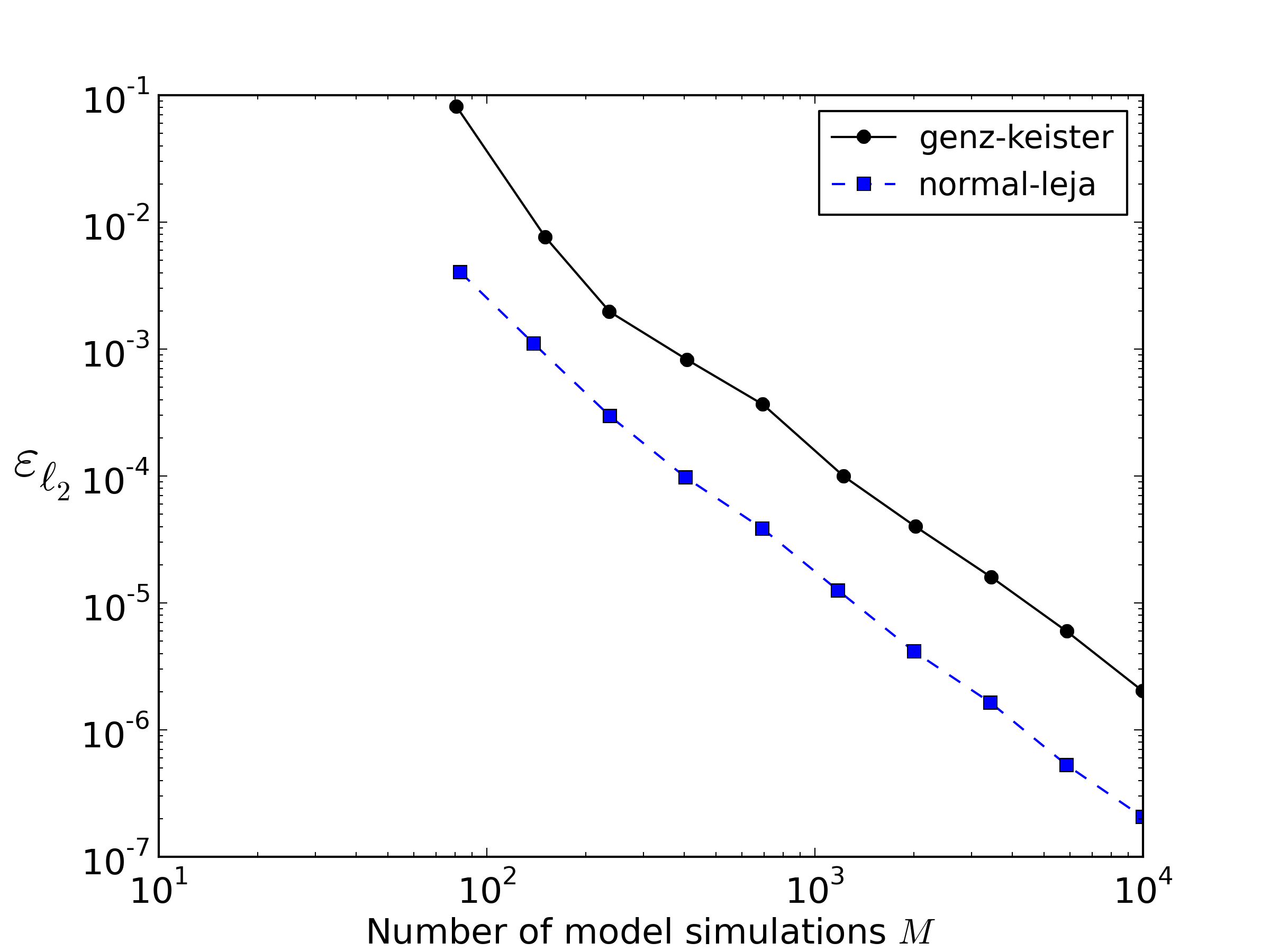}
\end{center}
\caption{Convergence of the Mean and RMSE in the sparse grid approximation of $V$ with respect to the number of model evaluations.}
\label{fig:resistor-d-40-rmse-convergence}
\end{figure}
In this case our comparison is not with CC, but with a nested Genz-Keister rule, which is one of the the standard ways to to peform nested interpolation and quadrature under a Gaussian weight \cite{genz_fully_1996}. Here our Leja rule is generated on an infinite domain with univariate weight function $w(z_j) = \exp(-(z_j - \mu)^2/(2 \sigma^2))$. In Figure \ref{fig:resistor-d-40-rmse-convergence} we see significant interpolatory improvement with the Leja rule, and even the quadrature results are competitive in this example.

}{
  
}

\section{Summary}\label{sec:summary}
\iftoggle{arxiv}{
  We have used Leja interpolatory grids as one-dimensional composite rules for adaptive Smolyak sparse grid construction. In one dimension, Leja rules are excellent interpolation grids and are a sequence, allowing one to generated nested rules with arbitrary granularity. We have shown that for several classical one-dimensional weight functions of interest, a corresponding weighted Leja rule produces a sequence whose empirical distribution asymptotically coincides with the limiting distribution for the Gauss quadrature nodes of the same family.

Using Leja rules to build up sparse grids in multiple dimensions grants the user a greater dexterity in adaptive refinement compared to more standard composite rules such as Clenshaw-Curtis. We have shown via several examples that the Leja rule can outperform standard high-order Smolyak constructions in interpolatory metrics, but are suboptimal when considering quadrature metrics. Design of Leja-like rules that are effective for approximating integrals will be the subject of future investigation.

}{
  
}

\section{Proof of Theorem \ref{thm:asymptotically-fekete}}\label{sec:proofs}
\iftoggle{arxiv}{
  \subsection{Weighted potential theory}

In this section we give the proof of Theorems \ref{thm:main-leja-result} and \ref{thm:asymptotically-fekete}. Indeed, if we show Theorem \ref{thm:asymptotically-fekete}, then well-established results imply Theorem \ref{thm:main-leja-result}. We recall our notation: $w$ is a given weight function associated to $z$. $v$ is the square root of $w$. $\widetilde{v}$ is related to $w$ through Theorem \ref{thm:main-leja-result}, and is essentially the square root of $w$, ignoring polynomial factors.

The proof of Theorem \ref{thm:asymptotically-fekete} relies on results from weighted potential theory. Potential theory is frequently explored in the complex plane $\C$, but we will restrict ourselves to subsets of $\C$ lying on the real axis. An excellent exposition of univariate weighted potential theory with comprehensive historical references is given in \cite{saff_logarithmic_1997}.

Let domain $I \subseteq \R$ with non-negative weight function $\widetilde{v}$ be given. On rather mild assumptions on $\widetilde{v}$ and $I$ then there exists a unique probability measure denoted $\mu_{\widetilde{v}}$ under which a weighted logarithmic energy for $Q = - \log \widetilde{v}$ is minimized:
\begin{align}\label{eq:logarithmic-energy}
  \mu_{\widetilde{v}} &= \argmin_{\mu\,:\, \mu(I) = 1} \int_I \int_I \log\frac{1}{|\eta - \xi| \widetilde{v}(\eta) \widetilde{v}(\xi)} \dx{\mu(\eta)}\dx{\mu(\xi)}  \\ \nonumber &= 
  \argmin_{\mu\,:\, \mu(I) = 1} \left[ \int_I \int_I \log\frac{1}{|\eta - \xi|} \dx{\mu(\eta)}\dx{\mu(\xi)} + 2 \int_I Q(\xi) \dx{\mu(\xi)}\right]
\end{align}
A common physical analogy of the above is to find the minimum-energy electrostatic charge distribution (measure) in a region $I$ when an external electrostatic field $Q$ is applied. The measure $\mu_{\widetilde{v}}$ is the weighted equilibrium measure of $I$ in the presence of the field $Q$. Even if $I$ is unbounded, $\mu_{\widetilde{v}}$ always has compact support. 

A discrete version of the above optimization problem is furnished by the concept of Fekete points. An array of points which maximizes a weighted Vandermonde matrix determinant is called a set of Fekete points; this weighted determinant is a discrete, unnormalized Monte-Carlo-like estimate of the negative exponential of the integral in \eqref{eq:logarithmic-energy}. A set of points $\xi_0, \ldots, \xi_n$ is an array of $\widetilde{v}$-weighted Fekete points if it satisfies
\begin{align}\label{eq:weighted-fekete-objective}
  \{\xi_0, \ldots \xi_n\} = \argmax_{z_0, \ldots, z_n} V(z_0, \ldots, z_n) \prod_{j=0}^n \widetilde{v}^n(z_j) = \argmax_{z_0, \ldots, z_n} \prod_{0 \leq j < k \leq n} \left| z_j - z_k \right| \widetilde{v}(z_j) \widetilde{v}(z_k),
\end{align}
where $V(z_0, \ldots, z_n)$ is modulus determinant of the $(n+1)\times(n+1)$ Vandermonde matrix $W$ with entries $W_{r,s} = z_r^s$ for $0 \leq r, s \leq n$. Fekete sets are not necessarily unique, and are notoriously difficult to compute exactly. However, Fekete points are excellent interpolation/approximation nodal sets. The $n\rightarrow \infty$ limiting behavior of the weighted Vandermonde determinant for Fekete nodes is described by the weighted transfinite diameter $\delta_{\widetilde{v}}$ ($=$ weighted logarithmic capacity \cite{mhaskar_weighted_1992}). Let $V_n$ denote the maximum weighted determinant from \eqref{eq:weighted-fekete-objective} achieved by Fekete points. Then
\begin{align}\label{eq:transfinite-diameter}
  \lim_{n\rightarrow\infty} \left(V_n\right)^{1/m_n} = \delta_{\widetilde{v}},
\end{align}
where $m_n \triangleq 1 + 2 + \cdots + n = \frac{n(n+1)}{2}$. Any triangular array of nodes whose determinant behaves like \eqref{eq:transfinite-diameter} is called \textit{asymptotically weighted Fekete}, alluding to the fact that the determinant is not exactly maximum, but is asymptotically comparable to Fekete points.

The connection between Fekete nodes and the equilibrium measure is established by the following result: if a triangular array of nodes $\left\{\xi_{j,n}\right\}_{j \leq n}$ is asymptotically weighted Fekete, then its empirical measure distributionally converges to the weighted equilibrium measure:
\begin{lemma}[\cite{totik_fast_1994,saff_logarithmic_1997}]\label{lemma:fekete-limit}
  Suppose $\xi_{j,n}$ is a triangular array of points satisfying
  \begin{align*}
    \lim_{n \rightarrow \infty} \left[ V\left(\xi_{0,n}, \ldots, \xi_{n,n} \right) \prod_{j=0}^n \widetilde{v}\left(\xi_{j,n} \right) \right]^{1/m_n} = \delta_{\widetilde{v}}.
  \end{align*}
  Then
  \begin{align*}
    \lim_{n\rightarrow\infty} \frac{1}{n} \sum_{j=1}^n \delta_{\xi_{j,n}} = \mu_{\widetilde{v}},
  \end{align*}
  where $\delta_z$ is the Dirac mass centered at $z$.
\end{lemma}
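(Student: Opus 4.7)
My plan is to prove Lemma \ref{lemma:fekete-limit} by the standard weighted potential-theoretic compactness argument, centered on the variational characterization of $\mu_{\widetilde{v}}$ as the unique minimizer of the weighted logarithmic energy $I_Q(\mu) = \iint \log\frac{1}{|x-y|}\,d\mu(x)d\mu(y) + 2\int Q\,d\mu$, with minimum value $F_{\widetilde{v}} = \log(1/\delta_{\widetilde{v}})$. Setting $\nu_n = \frac{1}{n+1}\sum_{j=0}^n \delta_{\xi_{j,n}}$, I would rewrite the hypothesis $[V \prod\widetilde{v}^n]^{1/m_n}\to\delta_{\widetilde{v}}$ as
$$J_n \;:=\; \frac{n+1}{n}\iint_{x\neq y}\log\frac{1}{|x-y|}\,d\nu_n(x)d\nu_n(y) \;+\; 2\int Q\,d\nu_n \;\longrightarrow\; F_{\widetilde{v}},$$
exhibiting $J_n$ as an off-diagonal discretization of $I_Q(\nu_n)$. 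The overall strategy is: establish tightness of $\{\nu_n\}$; extract a weak-$\ast$ subsequential limit $\nu$; prove $I_Q(\nu)\le F_{\widetilde{v}}$; invoke uniqueness of the minimizer to force $\nu=\mu_{\widetilde{v}}$; and deduce convergence of the full sequence since every subsequential limit is identical.

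Tightness when $I$ is unbounded rests on the admissibility condition $Q(z)-\log(1+|z|)\to+\infty$. Combined with the fact that $J_n$ is bounded above (by hypothesis it converges), a standard argument shows that if too much mass of $\nu_n$ escaped to infinity then $\int Q\,d\nu_n$ would blow up faster than the logarithmic interaction can balance, contradicting the bound on $J_n$. This confines an arbitrarily large fraction of mass to a fixed compact set, and Prokhorov/Helly yields a weak-$\ast$ convergent subsequence $\nu_{n_k}\to\nu$ (a probability measure).

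The technical core is the inequality $I_Q(\nu)\le F_{\widetilde{v}}$. Because the kernel $\log\frac{1}{|x-y|}$ is unbounded on the diagonal, one cannot directly take limits. I would truncate: for $M>0$, let $k_M(x,y)=\min\bigl(\log\frac{1}{|x-y|},M\bigr)$, which is continuous and bounded. Weak convergence gives $\iint k_M\,d\nu^2 = \lim_k \iint k_M\,d\nu_{n_k}^2$; the diagonal contribution to the right-hand side is $O(M/n_k)\to 0$, so
$$\iint k_M\,d\nu(x)d\nu(y) \;\le\; \liminf_k \iint_{x\neq y}\log\frac{1}{|x-y|}\,d\nu_{n_k}(x)d\nu_{n_k}(y).$$
Letting $M\uparrow\infty$ and invoking monotone convergence on the left lifts this to the full logarithmic kernel. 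An analogous truncation from above handles $\int Q\,d\nu_{n_k}$ (using lower semicontinuity and the growth of $Q$ to rule out mass loss). Adding the two estimates yields $I_Q(\nu)\le \liminf_k J_{n_k}=F_{\widetilde{v}}$. Uniqueness of the Frostman minimizer \cite{saff_logarithmic_1997} forces $\nu=\mu_{\widetilde{v}}$.

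The main obstacle is the joint unboundedness of $I$ and of the kernel in the Hermite and Laguerre regimes: the truncation-in-$M$ and tightness-at-infinity arguments must be coordinated so that the external field $Q$ genuinely controls both the escape of mass and the negative part of $\log\frac{1}{|x-y|}$ when points cluster. This is precisely why Theorem \ref{thm:main-leja-result} introduces contraction factors $k_n$: after contraction, $\widetilde{v}$ is admissible and the potential-theoretic machinery applies in the classical form, reducing the proof to the bounded-$I$ case where every step above is routine.
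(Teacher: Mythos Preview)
The paper does not supply its own proof of this lemma: it is stated with the citations \cite{totik_fast_1994,saff_logarithmic_1997} in the header and then immediately \emph{used} (``This will allow us to immediately use Lemma~\ref{lemma:fekete-limit} to conclude Theorem~\ref{thm:main-leja-result}''). So there is no in-paper argument to compare against; the comparison is between your sketch and the standard proof in the cited references.

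Your outline is essentially that standard proof (cf.\ \cite[Chapter~III]{saff_logarithmic_1997}): rewrite the determinant hypothesis as convergence of a discrete weighted energy, use admissibility of $\widetilde{v}$ for tightness, pass to a weak-$\ast$ limit via Prokhorov, apply the truncated-kernel/monotone-convergence trick to push lower semicontinuity of the energy through the limit, and finish with uniqueness of the equilibrium measure. Your calculation that $J_n = -\frac{1}{m_n}\log\bigl[V\prod_j\widetilde{v}^n(\xi_{j,n})\bigr]$ is correct and, incidentally, shows that the lemma as printed should have $\widetilde{v}^n$ rather than $\widetilde{v}$ in the product (consistent with the definition of $\delta_{\widetilde{v}}^{(N)}$ and with \eqref{eq:leja-asymptotically-fekete}).

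Two small remarks. First, the step ``an analogous truncation from above handles $\int Q\,d\nu_{n_k}$'' is really just lower semicontinuity of $\mu\mapsto\int Q\,d\mu$ under weak-$\ast$ convergence when $Q$ is lsc and bounded below on compacta; no truncation is needed there. Second, your final paragraph conflates two separate issues: the contraction factors $k_n$ in Theorem~\ref{thm:main-leja-result} are not part of the proof of this lemma at all. The lemma is a general potential-theoretic fact for any admissible $\widetilde{v}$ on $I$; the contractions enter only when the paper \emph{applies} the lemma, to manufacture a triangular array (the contracted Leja points) that actually satisfies its hypothesis.
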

Here (and elsewhere when discussing convergence of measures) equality is in the weak-$\ast$ sense. Our strategy is to first show Theorem \ref{thm:asymptotically-fekete}, that a contracted version of the $w$-weighted Leja points from \eqref{eq:weighted-leja-objective} are $\widetilde{v}$-weighted asymptotically Fekete. This will allow us to immediately use Lemma \ref{lemma:fekete-limit} to conclude Theorem \ref{thm:main-leja-result}. 

To proceed, we will need the notion of Chebyshev constants. Given a potential-theoretic admissible weight $\widetilde{v}$ on $I$, the weighted Chebyshev constant of order $n$ is
\begin{align*}
  \tau^{(n)}_{\widetilde{v}} = \inf \left\{ \left\| \widetilde{v}^n(z) p_n(z) \right\|_I\;\; | \;\; p_n(z) = z^n + q_{n-1}(z)\;\; \forall\;\; q_{n-1} \in P^{n-1}\right\},
\end{align*}
where $\|\cdot\|_I$ is the sup-norm on the domain $I$, and $P^n$ is the space of polynomials of degree $n$ or less. The sequence of constants $\left\{ \left( \tau^{(n)}_{\widetilde{v}}\right)^{1/n} \right\}$ is a decreasing sequence with a limit:
\begin{align*}
  \inf_{n} \left( \tau^{(n)}_{\widetilde{v}} \right)^{1/n} = \lim_{n\rightarrow \infty} \left( \tau^{(n)}_{\widetilde{v}} \right)^{1/n} \triangleq \tau_{\widetilde{v}}
\end{align*}
This limit is called the weighted Chebyshev constant \cite{mhaskar_weighted_1992}, and coincides with the transfinite diameter $\delta_{\widetilde{v}}$ in the unweighted case, but is distinct in general weighted scenarios. Note that by definition, 
\begin{align}\label{eq:chebyshev-constant-bound}
  \left\| \widetilde{v}^n(z) ( z^n + q_{n-1}(z) )\right\|_I \geq \left(\tau_{\widetilde{v}}\right)^n.
\end{align}
for any $q_{n-1} \in P^{n-1}$. In general, the relation between the Chebyshev constant $\tau_{\widetilde{v}}$ and the transfinite diameter $\delta_{\widetilde{v}}$ is given by
\begin{align}\label{eq:chebyshev-transfinite-relation}
  \tau_{\widetilde{v}} &= \delta_{\widetilde{v}} \exp\left( \int_S Q(z)\, \dx{\mu_{\widetilde{v}}(z)}\right), & S&= \textrm{supp}\, \mu_{\widetilde{v}}.
\end{align}

For the Jacobi, Hermite, and Laguerre cases mentioned in Theorem \ref{thm:main-leja-result}, our goal is to prove the result \eqref{eq:leja-asymptotically-fekete}. Our contraction proofs are different for the cases of bounded $I$ versus unbounded $I$. We first consider the bounded Jacobi case. 

\subsection{Jacobi case}\label{sec:proof-jacobi}
We prove that the Leja maximization scheme \eqref{eq:weighted-leja-objective} produces nodes whose limiting distribution is the \textit{unweighted} equilibrium measure, or the arcsine measure. For the unweighted case we have $\widetilde{v} \equiv 1$, the contraction is $k_n \equiv 1$ (and so is omitted), and the transfinite diameter $\delta$ and the Chebyshev constant $\tau$ are identical:
\begin{align}\label{eq:delta-tau-unweighted}
  \delta = \tau
\end{align}
We begin by considering the proof assuming $\alpha, \beta \geq 0$. We have $v(z) = f^{(\alpha/2,\beta/2)}(z) = (1-z)^{\alpha/2} (1+z)^{\beta/2}$ for $\alpha, \beta \geq 0$ over $I = [-1,1]$. We will need the following constants:
\begin{align*}
  C_1 &= C_1(\alpha, \beta) \triangleq \left\| f^{(\alpha/2, \beta/2)} \right\|_\infty < \infty \\
  C_2 &= C_2(\alpha, \beta) \triangleq \left\| \frac{f^{(\lceil\alpha/2\rceil, \lceil \beta/2 \rceil)}(z)}{f^{(\alpha/2,\beta/2)}(z)} \right\|_\infty < \infty
\end{align*}
With these constants, we have
\begin{align*}
  1 \geq \frac{v(z)}{C_1} = \frac{f^{(\alpha/2,\beta/2)}(z)}{C_1} \geq \frac{f^{(\lceil\alpha/2\rceil, \lceil \beta/2 \rceil)}(z)}{C_1 C_2}, \\
\end{align*}
We now note that $\lceil \alpha/2 \rceil$ and $\lceil \beta/2 \rceil$ are integers, and for shorthand we write $\gamma = \lceil \alpha/2 \rceil + \lceil \beta/2 \rceil$. Then the right-hand side of the above equation is a monic polynomial (modulo sign) of degree $\gamma$. Therefore, we have 
\begin{align*}
  V(z_0, \ldots, z_n) = \prod_{j=1}^n \prod_{k=0}^{j-1} |z_j - z_k| &\geq 
  \frac{1}{C_1^n} \prod_{j=1}^n v(z_j) \prod_{k=0}^{j-1} |z_j - z_k| \\
  &= \frac{1}{C_1^n} \prod_{j=1}^n \left\| v(z) \prod_{k=0}^{j-1} |z - z_k| \right\|_I \\
  &\geq \frac{1}{C_1^n C_2^n} \prod_{j=1}^n \left\| f^{(\lceil \alpha/2 \rceil, \lceil \beta/2 \rceil)}(z) \prod_{k=0}^{j-1} |z - z_k| \right\|_I \\
  &\geq \frac{1}{C_1^n C_2^n} \prod_{j=1}^n \tau^{j+\gamma} = \frac{\tau^{m_n} \tau^{n \gamma}}{C_1^n C_2^n}
\end{align*}
Thus, we have 
\begin{align*}
  \tau^{m_n} \left[\frac{\tau^{\gamma}}{C_1 C_2}\right]^n \leq V(z_0, \ldots, z_n) \leq \max_{x_0, \ldots x_n \in \Xi} V(x_0, \ldots, x_n)
\end{align*}
We raise all the above to the $1/m_n$ power, which yields
\begin{align*}
  \lim_{n\rightarrow\infty} \left[ V(z_0, \ldots, z_n) \right]^{1/m_n} \geq \tau &= \delta \\
  \lim_{n\rightarrow\infty} \left[ V(z_0, \ldots, z_n) \right]^{1/m_n} \leq & \delta
\end{align*}
We have thus proven that Jacobi-weighted Leja sequences for $\alpha, \beta \geq 0$ are $\widetilde{v}$-asymptotically Fekete, i.e., we have proven \eqref{eq:leja-asymptotically-fekete}. 

For $-1 < \alpha, \beta < 0$, we proceed without loss under the assumption that both $\alpha$ and $\beta$ are negative. In this case then $\xi_0 \neq \pm 1$ is arbitrarily chosen so that
\begin{align*}
  z_1 = \argmax_z v(z) | z - z_0 |.
\end{align*}
But since $\widetilde{v}$ is infinite at the endpoints $\pm 1$, then the maximum is achieved at one of these points, say $z_1 = +1$. Then the Leja iteration continues:
\begin{align*}
  z_2 = \argmax_z v(z) |z - z_0| |z - z_1|.
\end{align*}
Now since $z_1 = +1$, then the last term is identical to $(1 - z)$, and we may absorb this term into the weight, giving a more explicit formula:
\begin{align*}
  z_2 = \argmax_z (1-z)^{\alpha/2+1} (1+z)^{\beta/2} |z - z_0|.
\end{align*}
But since $\alpha/2 +1 > 0$, then the maximum (infinity) is now achieved at $z = -1$, which becomes $z_2$. Now we choose $z_3$ and again combine the terms $|z - z_1|$ and $|z - z_2|$ into the weight function:
\begin{align*}
  z_3 = \argmax_z (1-z)^{\alpha/2 +1} (1+z)^{\alpha/2 + 1} \prod_{k=0 \atop k\neq 1, 2}^{2} |z - z_k|
\end{align*}
Proceeding in this way, future Leja points are chosen according to
\begin{align*}
  z_{n} &= \argmax_z f^{(\alpha/2+1, \beta/2+1)}(z) \left( \prod_{k=0 \atop k \neq 1, 2}^{n-1} |z - z_k|\right) 
\end{align*}
In other words, we choose $z_n$ as a Leja optimization with a new weight function $f^{(\alpha/2+1, \beta/2+1)}$, whose parameters are $\alpha/2+1 > 0$ and $\beta/2+1 >0$. This effectively reduces the problem to the case where $\alpha, \beta > 0$. Then as before, this Leja sequence is asymptotically (unweighted) Fekete and so has empirical measure that converges to the arcsine measure $\mu$.

\subsection{Hermite case}\label{sec:proof-hermite}
In this section, $I = \R$ (and for shorthand write $\|\cdot\|_I = \| \cdot \|$) and $w(z) = z^{2 \mu} \exp(-2 |z|^{\alpha})$. (Compared to Theorem \ref{thm:main-leja-result}, in this section we have redefined $z \gets 2^{1/\alpha} z$ to make the computations cleaner.) The contraction factor is $k_n = n^{-1/\alpha}$. As usual, define $v(z) = \sqrt{w(z)}$. 
The limit weight $\widetilde{v}$ from Theorem \ref{thm:main-leja-result} is $\widetilde{v}(z) = \exp(-|z|^\alpha)$. For this $\widetilde{v}$, it is known that the weighted Chebyshev constant is related to the weighted transfinite diameter by the relation
\begin{align}\label{eq:hermite-chebyshev-transfinite-relation}
  \exp\left(-\frac{1}{2\alpha}\right) \tau_{\widetilde{v}} = \delta_{\widetilde{v}}.
\end{align}
See e.g., \cite{mhaskar_extremal_1984,mhaskar_weighted_1992}. We choose a weighted Leja sequence of points $z_n$ according to \eqref{eq:weighted-leja-objective}.
The negative log-weight of $\widetilde{v}$ is $Q = |z|^\alpha$, and it is a homogeneous function with homogeneity exponent $\alpha$:
\begin{align*}
  Q(cz) &= c^\alpha Q(z), & \forall\;\; c > 0, z \in \C
\end{align*}
Then the following are easily verified: for any $n > 0$:
\begin{align}\label{eq:sqrt-weight-homogeneity}
  \widetilde{v}\left( n^{1/\alpha} z\right) &= \widetilde{v}^n(z), & \widetilde{v}^n\left( n^{-1/\alpha} z\right) = \widetilde{v}(z)
\end{align}
Given the Leja sequence $z_n$ from \eqref{eq:weighted-leja-objective}, then for each $n$ we define progressively contracted versions of the grid:
\begin{align*}
  z_{j,n} &= n^{-1/\alpha} z_j, & j, n = 1, \ldots
\end{align*}
We will only need $z_{j,n}$ for $j \leq n$: we view $z_{j,n}$ as a triangular array of points with $j \leq n$.  We note that one can easily transform between one contracted set of nodes and another:
\begin{align*}
  z_{j,n} = \left(\frac{n-1}{n} \right)^{1/\alpha} z_{j,n-1},
\end{align*}
which in turn implies that for our particular choice of family of weight functions:
\begin{align*}
  \widetilde{v}^n\left(z_{j,n}\right) = \widetilde{v}^{n-1} \left(z_{j,n-1}\right) = \cdots = \widetilde{v}\left(z_{j,1}\right) = \widetilde{v}\left(z_j\right)
\end{align*}
We wish to prove that the array $z_{j,n}$ is $\widetilde{v}$-weighted asymptotically Fekete. I.e., we wish to prove
\begin{align}\label{eq:weighted-fekete-condition}
  \lim_{n \rightarrow \infty} \left[ \left| \det V\left(z_{0,n}, \ldots, z_{n,n}\right) \right| \prod_{j=0}^n \widetilde{v}^n\left(z_{j,n}\right) \right]^{1/m_n} = \delta_{\widetilde{v}},
\end{align}

We first consider the case with the parameter $\mu > 0$. 
Using the explicit Vandermonde determinant formula \eqref{eq:weighted-fekete-objective} yields the following formula that we wish to prove:
\begin{align}\label{eq:hermite-wish}
  \lim_{n \rightarrow \infty} \left[ \widetilde{v}^n(z_{0,n}) \prod_{j=1}^n \widetilde{v}^n(z_{j,n}) \prod_{k=0}^{j-1} \left| z_{j,n} - z_{k,n} \right|\right]^{1/m_n} = \delta_{\widetilde{v}}.
\end{align}
As with the Jacobi case, showing that the limit is $\leq \delta_{\widetilde{v}}$ is straightforward from the definition of $\delta_{\widetilde{v}}$, so we concentrate on the inequality $\geq$. We have 
\begin{align}\label{eq:hermite-break}
  \widetilde{v}^n(z_{0,n}) \prod_{j=1}^n \widetilde{v}^n(z_{j,n}) \prod_{k=0}^{j-1} \left| z_{j,n} - z_{k,n} \right| &\stackrel{\eqref{eq:sqrt-weight-homogeneity}}{=} \widetilde{v}(z_0) \prod_{j=1}^n n^{-j/\alpha} \underbrace{\widetilde{v}(z_j) \prod_{k=0}^{j-1} \left| z_j - z_k \right|}_{\text{(a)}}.
\end{align}
For the term (a), we note that $z_j \neq 0$ for $j \geq 1$ since $\mu > 0$. Therefore, we may write this term as
\begin{align*}
  \textrm{(a)} &= \underbrace{|z_j|^{-\mu}}_{\text{(aa)}} \underbrace{v(z_j) \prod_{k=0}^{j-1} \left| z_j - z_k \right|}_{\text{(ab)}}.
\end{align*}
To compute lower bounds for terms (aa) and (ab) we will need the following notation: for the $\widetilde{v}$-weighted equilibrium measure, we have
\begin{align}\label{eq:mu-support}
  S = \textrm{supp}\, \mu_{\widetilde{v}} &= [-c,c], & S_n &= \left[ -n^{1/\alpha} c, n^{1/\alpha} c\right].
\end{align}
The constant $c$ is $2^{1/\alpha} b(\alpha)$, with $b(\alpha)$ given in the ``Hermite" case of Table \ref{tab:summary-table}, or in Table \ref{tab:hermite-table}.

To bound (aa), we know that $z_j$ was computed from the optimization \eqref{eq:weighted-leja-objective}. This allows us to derive an upper bound for the magnitude of these weighted Leja points.
\begin{lemma}
  Let $M = \lceil \mu \rceil$. For each $j$, we have the following bound for term (aa):
  \begin{align}\label{eq:zj-bound}
    |z_j|^{-\mu} \geq (M+j)^{-\mu / \alpha} c^{-\mu}
  \end{align}
\end{lemma}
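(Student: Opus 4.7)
The plan is to compare the Leja objective $F(z) = v(z)\prod_{k=0}^{j-1}|z - z_k|$ against an auxiliary weighted monic polynomial to which a Mhaskar--Rakhmanov--Saff (MRS) type extremal result applies directly. Concretely, I introduce
\begin{align*}
  \tilde F(z) \,:=\, \widetilde{v}(z)\,|z|^M\prod_{k=0}^{j-1}|z-z_k|,
\end{align*}
so that $z^M\prod_k(z-z_k)$ is a monic polynomial of degree $N := M+j$ and, crucially, $\tilde F(z) = |z|^{M-\mu}\,F(z)$ for $z \neq 0$. This identity is what will let me transfer a bound on the location of the maximizer of $\tilde F$ (which is a weighted polynomial amenable to MRS) to a bound on the location of the maximizer $z_j$ of $F$.

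First I would invoke MRS for the weight $\widetilde v(z) = \exp(-|z|^\alpha)$ and the degree-$N$ monic polynomial $z^M\prod_k(z-z_k)$ to conclude that the supremum of $\tilde F$ over $\R$ is attained in the MRS interval $S_{M+j} = [-c(M+j)^{1/\alpha},\,c(M+j)^{1/\alpha}]$ from \eqref{eq:mu-support}; denote such a maximizer by $z^*$. Since $\tilde F(z^*) > 0$, we have $z^* \neq 0$ and $z^* \neq z_k$ for any $k$, so $F(z^*) > 0$ as well. Next I combine two inequalities: Leja maximality of $z_j$ gives $F(z_j) \geq F(z^*)$, while MRS maximality of $z^*$ gives
\begin{align*}
  |z^*|^{M-\mu}\,F(z^*) \,=\, \tilde F(z^*) \,\geq\, \tilde F(z_j) \,=\, |z_j|^{M-\mu}\,F(z_j).
\end{align*}
Chaining these and dividing by the positive quantity $F(z_j)F(z^*)$ yields $|z_j|^{M-\mu} \leq |z^*|^{M-\mu}$. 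When $M > \mu$ (i.e., $\mu \notin \mathbb{Z}$), taking the $(M-\mu)$-th root gives $|z_j| \leq |z^*| \leq c(M+j)^{1/\alpha}$. When $M = \mu$ (integer $\mu$), the functions $F$ and $\tilde F$ coincide on $\R \setminus \{0\}$, so $z^*$ is itself a maximizer of $F$ and the smallest-magnitude tiebreak \eqref{eq:maximizer-choice} forces $|z_j| \leq |z^*| \leq c(M+j)^{1/\alpha}$. Taking the $-\mu$ power then delivers \eqref{eq:zj-bound}.

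The main obstacle is the first step: invoking the MRS estimate with precisely the right scaling so that the resulting extremal interval for weight $\widetilde v$ acting on a monic polynomial of the \emph{varying} degree $M+j$ is exactly $S_{M+j}$, with the paper's constant $c$ rather than some other multiple. This requires careful tracking of the potential-theoretic scaling relation between the equilibrium measure for external field $Q = |z|^\alpha$ and that for $Q/N = |z|^\alpha/N$, which is where the homogeneity identities \eqref{eq:sqrt-weight-homogeneity} and the transfinite-diameter/Chebyshev-constant machinery recalled earlier in this section enter. Once that MRS bound is in hand, the remainder is a short two-inequality manipulation together with a case split on whether $\mu$ is an integer.
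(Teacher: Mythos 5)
Your argument is correct and is essentially the paper's own proof: both rest on the Mhaskar--Rakhmanov--Saff localization \eqref{eq:supnorm-house-1} applied to the auxiliary $\widetilde{v}$-weighted monic polynomial of degree $M+j$ (your $\tilde F$ is the paper's $f$), followed by a comparison of maximizers of $F$ and $\tilde F$ through the factor $|z|^{M-\mu}$, with the same split between integer and non-integer $\mu$. Your two-inequality chain is just a rephrasing of the paper's contradiction argument, and the MRS step you flag as the ``main obstacle'' is exactly the cited result \eqref{eq:supnorm-house-1}, so no additional scaling work is needed.
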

\begin{proof}
  We make use of the following result \cite{mhaskar_extremal_1984} that compactifies the set on which the supremum norm of a weighted polynomial ``lives" for our exponential weights:
\begin{subequations}
\begin{align}
  \label{eq:supnorm-house-1}
  \left\| \widetilde{v}(z) \left( z^n + q_{n-1}\right) \right\|_\R &= 
  \left\| \widetilde{v}(z) \left( z^n + q_{n-1}\right) \right\|_{S_n},
  & \forall \,\, q_{n-1} \in P_{n-1} \\
  \label{eq:supnorm-house-2}
  \left\| \widetilde{v}^n(z) \left( z^n + q_{n-1}\right) \right\|_\R &= 
  \left\| \widetilde{v}^n(z) \left( z^n + q_{n-1}\right) \right\|_{S},
  & \forall \,\, q_{n-1} \in P_{n-1}
\end{align}
\end{subequations}
where $S$ and $S_n$ are given by \eqref{eq:mu-support}. Note that $z_j$ is chosen as in \eqref{eq:weighted-leja-objective} with
\begin{align*}
v(z_j) \prod_{k=0}^{j-1} \left| z_j - z_k \right|
   = \left\| v(z) \prod_{k=0}^{j-1} \left| z - z_k \right| \right\|
   = \left\| z^\mu \widetilde{v}(z) \prod_{k=0}^{j-1} \left| z - z_k \right| \right\|
\end{align*}
If $\mu$ is an integer, then the argument under the norm is a $\widetilde{v}$-weighted monic polynomial of degree $\mu + j$. Thus, the extremum of the argument is achieved on the set $S_{\mu+j}$, which implies that the smallest-magnitude maximizer as stipulated in \eqref{eq:maximizer-choice} satisfies $z_j \in S_{\mu + j}$. The result \eqref{eq:zj-bound} follows.

If $\mu$ is not an integer, then consider the function
\begin{align*}
  f(z) = \left|z\right|^M \widetilde{v}(z) \prod_{k=0}^{j-1} \left| z - z_k \right|
\end{align*}
We know that $z_j$ is a maxmizer of $\left|z\right|^{\mu - M} f(z)$. Let $z_\ast$ be the smallest-magnitude maximizer of $f(z)$. Suppose $\left|z_j\right| > \left|z_\ast\right|$; since $\mu - M < 0$ we have
\begin{align*}
  \left| z_j \right|^{\mu - M} f(z_j) < \left| z_\ast \right|^{\mu - M} f(z_j) < \left| z_\ast \right|^{\mu - M} f(z_\ast),
\end{align*}
which is a contradiction since $z_j$ maximizes the norm of $z^{\mu - M} f(z)$. Therefore, $\left|z_j\right| \leq \left|z_\ast\right|$. But $z_\ast$ maximizes $f$, which is a $\widetilde{v}$-weighted polynomial of degree $M+j$. Therefore, by \eqref{eq:supnorm-house-1} we have $z_\ast \in S_{M + j}$. This in turn implies $z_j \in S_{M+j}$, and again \eqref{eq:zj-bound} follows.
\end{proof}


We likewise have a bound for the (ab) term:
\begin{lemma}
  For every $j$, we have the following bound for term (ab):
  \begin{align}\label{eq:ab-bound}
    v(z_j) \prod_{k=0}^{j-1} \left| z_j - z_k \right| \geq \frac{j^{(j+\mu)/\alpha}}{c^{M-\mu}} \left( \tau_{\widetilde{v}}\right)^{j+M},
  \end{align}
  where $c$ is the constant from \eqref{eq:mu-support}.
\end{lemma}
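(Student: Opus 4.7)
The plan is to lower bound the term (ab) by manipulating it until the Chebyshev constant bound \eqref{eq:chebyshev-constant-bound} can be applied. By the definition of the Leja iteration \eqref{eq:weighted-leja-objective}, we have
\begin{align*}
v(z_j)\prod_{k=0}^{j-1}|z_j-z_k| = \left\| v(z)\prod_{k=0}^{j-1}|z-z_k|\right\|_I,
\end{align*}
and we may decompose $v(z) = |z|^\mu \widetilde v(z)$. The trick is to trade $|z|^\mu$ for $|z|^M$ (where $M = \lceil\mu\rceil$) on a compact set, so that the expression becomes a genuinely polynomial quantity of integer degree.

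To execute the trade, note that $|z|^M\widetilde v(z)\prod_{k=0}^{j-1}|z-z_k|$ is the modulus of a $\widetilde v$-weighted monic polynomial of degree $N := M+j$; by \eqref{eq:supnorm-house-1} its supremum over $\R$ is attained on $S_N = [-N^{1/\alpha}c, N^{1/\alpha}c]$. The Leja value is at least the supremum over $S_N$, and for $z \in S_N$ one has $|z| \leq N^{1/\alpha}c$, hence the pointwise inequality $|z|^\mu \geq |z|^M / (N^{1/\alpha}c)^{M-\mu}$. Combining these observations yields
\begin{align*}
v(z_j)\prod_{k=0}^{j-1}|z_j-z_k|
\;\geq\; \frac{1}{(N^{1/\alpha}c)^{M-\mu}} \left\| z^M\widetilde v(z)\prod_{k=0}^{j-1}(z-z_k)\right\|_\R.
\end{align*}

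The remaining norm is a standard Chebyshev-type quantity, but for $\widetilde v$ rather than $\widetilde v^N$; the homogeneity identity \eqref{eq:sqrt-weight-homogeneity} is what bridges the two. Substituting $z = N^{1/\alpha}u$ converts $\widetilde v(z)$ into $\widetilde v^N(u)$ and the degree-$N$ monic polynomial $z^M\prod_{k=0}^{j-1}(z-z_k)$ into $N^{N/\alpha}(u^N + \tilde q_{N-1}(u))$ for some $\tilde q_{N-1}\in P^{N-1}$. Applying the Chebyshev constant bound \eqref{eq:chebyshev-constant-bound} to the transformed expression gives
\begin{align*}
\left\| z^M\widetilde v(z)\prod_{k=0}^{j-1}(z-z_k)\right\|_\R \;\geq\; N^{N/\alpha}\,\tau_{\widetilde v}^{\,N}.
\end{align*}

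Finally, the exponents consolidate: $N^{N/\alpha}/N^{(M-\mu)/\alpha} = N^{(j+\mu)/\alpha}$, so the composite bound is $\tfrac{N^{(j+\mu)/\alpha}}{c^{M-\mu}}\,\tau_{\widetilde v}^{j+M}$. Since $N = M+j \geq j$ and $(j+\mu)/\alpha > 0$ (because $\mu > 0$), we may replace $N$ by $j$ in the prefactor, arriving at the claim \eqref{eq:ab-bound}. I anticipate no real obstacle beyond bookkeeping of exponents; the only substantive step is the scaling substitution, which is the precise moment where the Hermite homogeneity \eqref{eq:sqrt-weight-homogeneity} converts the degree-$N$ $\widetilde v$-weighted problem into a degree-$N$ $\widetilde v^N$-weighted problem to which \eqref{eq:chebyshev-constant-bound} directly applies.
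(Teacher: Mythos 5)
Your proof is correct and follows essentially the same route as the paper's: the same decomposition $v(z) = |z|^\mu \widetilde{v}(z)$, the same trade of $|z|^\mu$ for $|z|^M$ at the cost of a power of the endpoint of the localization interval, and the same three ingredients — the sup-norm localization results, the homogeneity identity \eqref{eq:sqrt-weight-homogeneity}, and the Chebyshev-constant bound \eqref{eq:chebyshev-constant-bound}. The only difference is the ordering of steps: the paper contracts by $j^{-1/\alpha}$ first and works on the fixed support $S$ via \eqref{eq:supnorm-house-2} (using $\left\|\widetilde{v}\right\| \leq 1$ to raise the weight power from $j$ to $j+M$), whereas you localize at the original scale on $S_{M+j}$ via \eqref{eq:supnorm-house-1} and contract only at the end, which in fact produces the marginally stronger prefactor $(M+j)^{(j+\mu)/\alpha}$ before you relax it to $j^{(j+\mu)/\alpha}$.
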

\begin{proof}
  Since, by \eqref{eq:weighted-leja-objective}, $z_j$ maximizes the left-hand side of \eqref{eq:ab-bound}, we are concerned with bounding
\begin{align*}
  \left\| v(z) \prod_{k=0}^{j-1} \left| z - z_k \right|\right\| = 
  \left\| z^\mu \widetilde{v}(z) \prod_{k=0}^{j-1} \left| z - z_k \right|\right\|.
\end{align*}
We contract $z \gets j^{-1/\alpha} z$ to obtain
\begin{subequations}
\begin{align}\label{eq:lemma2-sub1}
  \left\| z^\mu \widetilde{v}(z) \prod_{k=0}^{j-1} \left| z - z_k \right|\right\| = 
  j^{(j+\mu)/\alpha} \left\| z^\mu \widetilde{v}^j(z) \prod_{k=0}^{j-1} \left| z - z_{k,j} \right|\right\|.
\end{align}
We also have the following properties:
\begin{align}\label{eq:lemma2-sub2}
  \min_{z \in S} z^{\mu - M} &= c^{\mu - M} \\
  \label{eq:lemma2-sub3}
  \left\|\widetilde{v}\right\| &= 1
\end{align}
\end{subequations}
 Thus we have
\begin{align*}
  v(z_j) \prod_{k=0}^{j-1} \left| z_j - z_k \right| &\stackrel{\eqref{eq:weighted-leja-objective}}{=} 
  \left\| v(z) \prod_{k=0}^{j-1} \left| z - z_k \right| \right\| \\
  &\stackrel{\eqref{eq:lemma2-sub1}}{=} 
  j^{(j+\mu)/\alpha} \left\| z^\mu \widetilde{v}^j(z) \prod_{k=0}^{j-1} \left| z - z_{k,j} \right|\right\| \\
  &\geq
  j^{(j+\mu)/\alpha} \left\| z^\mu \widetilde{v}^j(z) \prod_{k=0}^{j-1} \left| z - z_{k,j} \right|\right\|_S \\
  &\stackrel{\eqref{eq:lemma2-sub2}}{\geq}
  j^{(j+\mu)/\alpha} \frac{1}{c^{M-\mu}} \left\| z^M \widetilde{v}^j(z) \prod_{k=0}^{j-1} \left| z - z_{k,j} \right|\right\|_S \\
  &\stackrel{\eqref{eq:lemma2-sub3}}{\geq}
  j^{(j+\mu)/\alpha} \frac{1}{c^{M-\mu}} \left\| z^M \widetilde{v}^{j+M}(z) \prod_{k=0}^{j-1} \left| z - z_{k,j} \right|\right\|_S \\
  &\stackrel{\eqref{eq:supnorm-house-2}}{=}
  j^{(j+\mu)/\alpha} \frac{1}{c^{M-\mu}} \left\| z^M \widetilde{v}^{j+M}(z) \prod_{k=0}^{j-1} \left| z - z_{k,j} \right|\right\|_{\R} \\
  &\stackrel{\eqref{eq:chebyshev-constant-bound}}{\geq}
  j^{(j+\mu)/\alpha} \frac{1}{c^{M-\mu}} \left( \tau_{\widetilde{v}}\right)^{j+M}.
\end{align*}
\end{proof}
With these two lemmas obtained, we can bound term (a) from \eqref{eq:hermite-break}:
\begin{align*}
  \textrm{(a)} &= |z_j|^{-\mu} v(z_j) \prod_{k=0}^{j-1} \left| z_j - z_k \right| \\
               &\stackrel{\eqref{eq:zj-bound}}{\geq} \frac{1}{(M+j)^{\mu/\alpha} c^{\mu}} v(z_j) \prod_{k=0}^{j-1} \left| z_j - z_k \right| \\
               &\stackrel{\eqref{eq:ab-bound}}{\geq} 
               \frac{j^{(j+\mu)/\alpha}}{(M+j)^{\mu/\alpha} c^{M}} \left( \tau_{\widetilde{v}}\right)^{j+M}
\end{align*}
We can therefore bound the entire weighted determinant from \eqref{eq:hermite-break}:
\begin{align*}
  \widetilde{v}^n(z_{0,n}) \prod_{j=1}^n \widetilde{v}^n(z_{j,n}) \prod_{k=0}^{j-1} \left| z_{j,n} - z_{k,n} \right| &= \widetilde{v}(z_0) \prod_{j=1}^n n^{-j/\alpha} \underbrace{\widetilde{v}(z_j) \prod_{k=0}^{j-1} \left| z_j - z_k \right|}_{\text{(a)}} \\
  &\geq \widetilde{v}(z_0) \prod_{j=1}^n n^{-j/\alpha}
               \frac{j^{(j+\mu)/\alpha}}{(M+j)^{\mu/\alpha}c^{M}} \left( \tau_{\widetilde{v}}\right)^{j+M} \\
               &= \widetilde{v}(z_0) \left[ \prod_{j=1}^n \frac{j}{(M+j)} \right]^{\mu/\alpha} \left[ \frac{\tau_{\widetilde{v}}}{c}\right]^{M n} \left[ \prod_{j=1}^n \left(\frac{j}{n}\right)^j \right]^{1/\alpha} \left( \tau_{\widetilde{v}} \right)^{m_n}
\end{align*}
We now raise the result to the $1/m_n = \frac{2}{n(n+1)}$ power. The first two bracketed terms have the following limit:
\begin{align*}
  \lim_{n\rightarrow \infty}  \left[ \prod_{j=1}^n \frac{j}{(M+j)} \right]^{\frac{2 \mu}{\alpha n(n+1)}} &= 1 \\
  \lim_{n \rightarrow\infty} \left[ \frac{\tau_{\widetilde{v}}}{c}\right]^{\frac{2 M n}{n (n+1)} } &= 1
\end{align*}
The logarithm of the last bracketed term has the limit
\begin{align*}
  \lim_{n \rightarrow \infty}  \log \left[ \left[ \prod_{j=1}^n \left(\frac{j}{n}\right)^j \right]^{1/\alpha} \right]^{2/(n(n+1))} &= \frac{2}{\alpha} \lim_{n \rightarrow \infty} \frac{1}{n+1} \sum_{j=1}^n \left(\frac{j}{n}\right) \log \left(\frac{j}{n}\right) \\
  &= \frac{2}{\alpha} \int_0^1 x \log x \dx{x} \\
  &= -\frac{1}{2\alpha}
\end{align*}
Therefore, we have
\begin{align*}
  \lim_{n \rightarrow \infty} \left[ V(z_{0,n}, \ldots, z_{n,n}) \prod_{j=0}^n \widetilde{v}(z_{j,n}) \right]^{1/m_n} &=  \lim_{n \rightarrow \infty} \left[ \widetilde{v}^n(z_{0,n}) \prod_{j=1}^n \widetilde{v}^n(z_{j,n}) \prod_{k=0}^{j-1} \left| z_{j,n} - z_{k,n} \right|\right]^{1/m_n} \\
 &\geq \tau_{\widetilde{v}} \exp\left(-\frac{1}{2\alpha}\right) = \delta_{\widetilde{v}},
\end{align*}
and so we have proven \eqref{eq:hermite-wish}, that the weighted Leja points are asymptotically weighted Fekete.

For the case $\mu < 0$, we may repeat arguments for the Jacobi $\alpha,\beta<0$ case: $\mu < 0$ implies that node $z_1 = 0$ is chosen (assuming $z_0 \neq 0$), which then reverts the $w$-weighted Leja objective \eqref{eq:weighted-leja-objective} to one where $\mu > 0$.

\subsection{Laguerre case}\label{sec:laguerre-proof}
The Laguerre result for weighted Leja sequences defined by \eqref{eq:weighted-leja-objective} can easily be obtained by following the argument in Section \ref{sec:proof-hermite}, so we omit the details. We only mention that for weights of the form $\widetilde{v}(z) = \exp(- |z|)$ on $[0, \infty)$, we can directly obtain from \eqref{eq:chebyshev-transfinite-relation}:
\begin{align*}
  \tau_{\widetilde{v}} = \sqrt{e} \delta_{\widetilde{v}}.
\end{align*}
I.e., \eqref{eq:hermite-chebyshev-transfinite-relation} holds with $\alpha = 1$ \cite{mhaskar_extremal_1983}. With this, the remainder of the proof follows in precisely the same fashion as the Hermite case.

}{
  
}

\bibliographystyle{plain}
\bibliography{sparse-leja}

\end{document}